\theoremstyle{plain}
\newtheorem{thm}{Theorem}
\newtheorem{lma}[thm]{Lemma}
\newtheorem{prop}[thm]{Proposition}
\newtheorem{kor}[thm]{Corollary}
\newtheorem{df}[thm]{Definition}
\newtheorem{constr}[thm]{Construction}
\newtheorem{conj}[thm]{Conjecture}
\newtheorem{quest}[thm]{Question}
\numberwithin{thm}{section}
\theoremstyle{remark}
\newtheorem{anm}[thm]{Remark}
\DeclareMathOperator{\dev}{dev}
\DeclareMathOperator{\supp}{supp}
\title{Triple arrays from difference sets}
\author{Tomas Nilson and Peter J. Cameron}
\begin{document}

\begin{abstract}

This paper addresses the question whether triple arrays can be constructed from Youden squares developed from difference sets. We prove that if the difference set is abelian, then having $-1$ as multiplier is both a necessary and sufficient condition for the construction to work. Using this, we are able to give a new infinite family of triple arrays. We also give an alternative and more direct version of the construction, leaving out the intermediate step via Youden squares. This is used when we analyse the case of non-abelian difference sets, for which we prove a sufficient condition for giving triple arrays. We do a computer search for such non-abelian difference sets, but have not found any examples satisfying the given condition. 

\end{abstract}
\keywords{Triple array. Youden square. Block design. Difference set.}
\maketitle
%\tableofcontents
%\newpage

%XXXXXXXXXXXXXXXXXXXXXXXXXXXXXXXXXXXXXXXXXXXXXXXXXXXXXX
%XXXXXXXXXXXXXXXXXXXXXXXXXXXXXXXXXXXXXXXXXXXXXXXXXXXXXX
%============== introduction ======================
\begin{section}{Introduction}
%--------------------------------------------------

Agrawal~\cite{Agrawal1} considered a type of row-column design which now goes by the name of ``triple array''. He observed that such an array implies the existence of a symmetric balanced incomplete block design, and asked whether triple arrays can be constructed from such designs. The statement that this can always be done, with exception for trivial and the smallest non-trivial design, is known as \emph{Agrawal's conjecture}.

\begin{df}\label{TAdef}
A \emph{triple array} is an $r\times c$ array on $v$ symbols arranged so that no symbol occurs more than once in any row or column, and satisfies the following four conditions:
\begin{itemize}
\item [TA1.] Each symbol occurs $k$ times (equireplicate).
\item [TA2.] Any two distinct rows contain $\lambda_{rr}$ common symbols.
\item [TA3.] Any two distinct columns contain $\lambda_{cc}$ common symbols.
\item [TA4.] Any row and column contain $\lambda_{rc}$ common symbols.
\end{itemize}
\end{df}
For a triple array we use the notation $TA(v,k,\lambda_{rr},\lambda_{cc},\lambda_{rc}:r\times c)$. An array as above that satisfies conditions TA1--TA3 is called a \emph{double array} with notation $DA(v,k,\lambda_{rr},\lambda_{cc}:r\times c)$, and a double array that does not satisfy TA4 is called a \emph{proper} double array.

If we interchange the roles of columns and symbols in a triple array, we get what is called the \emph{RL form} of a triple array. It is an incomplete $r\times v$ array, with no repetition in any row or column, satisfying the following four conditions, corresponding to the above conditions TA1--TA4:
\begin{itemize}
\item [RTA1.] Every column has $k$ occupied cells.
\item [RTA2.] For any pair of rows, there are $\lambda_{rr}$ columns in which both rows are occupied.
\item [RTA3.] Every pair of symbols must occur together in $\lambda_{cc}$ columns.
\item [RTA4.] The columns with occupied cells in row $i$ must contain every symbol exactly $\lambda_{rc}$ times, for all $i$, $1\leq i\leq r$.
\end{itemize}
Correspondingly, an array as above satisfying RTA1--RTA3 is the RL-form of a double array.\\

Triple arrays satisfy the following inequality.

\begin{thm}\cite{MPWY}
Any $TA(v,k,\lambda_{rr},\lambda_{cc},\lambda_{rc}:r\times c)$ satisfies $v\geq r+c-1$. 
\end{thm}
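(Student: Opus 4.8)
The plan is to prove the inequality by a rank argument over $\mathbb{R}$. For each symbol $s$ record which rows and which columns of the array contain $s$; this produces a $v\times(r+c)$ zero--one matrix $M=[\,R\mid C\,]$, where $R$ is the symbol-by-row incidence matrix and $C$ the symbol-by-column incidence matrix. Since $M$ has only $v$ rows, $\operatorname{rank}_{\mathbb R}M\le v$, so it is enough to show $\operatorname{rank}_{\mathbb R}M\ge r+c-1$; and as $\operatorname{rank}_{\mathbb R}M=\operatorname{rank}_{\mathbb R}(M^{\mathsf T}M)$, it suffices to show that the $(r+c)\times(r+c)$ Gram matrix $M^{\mathsf T}M$ has nullity exactly $1$.

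The first step is to read off the block structure of $M^{\mathsf T}M$ from TA1--TA4. Since no symbol repeats in a row or column, each row of the array contains exactly $c$ symbols and each column exactly $r$ symbols, so $R^{\mathsf T}R$ has diagonal entries $c$ and, by TA2, off-diagonal entries $\lambda_{rr}$; likewise $C^{\mathsf T}C=(r-\lambda_{cc})I_c+\lambda_{cc}J$ by TA3; and by TA4 the number of symbols common to a given row and a given column is constantly $\lambda_{rc}$, so $R^{\mathsf T}C=\lambda_{rc}J$. Hence
\[
M^{\mathsf T}M=\begin{pmatrix}(c-\lambda_{rr})I_r+\lambda_{rr}J & \lambda_{rc}J\\ \lambda_{rc}J & (r-\lambda_{cc})I_c+\lambda_{cc}J\end{pmatrix}.
\]

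The second step is to pin down the kernel. One dependence is forced by TA1: every symbol lies in exactly $k$ rows and $k$ columns, so $R\mathbf 1_r=k\mathbf 1_v=C\mathbf 1_c$, whence $M\,(\mathbf 1_r,-\mathbf 1_c)^{\mathsf T}=0$ and already $\operatorname{rank}_{\mathbb R}M\le r+c-1$. For the matching lower bound, suppose $M^{\mathsf T}M\,(y,z)^{\mathsf T}=0$ with $y\in\mathbb R^{r}$, $z\in\mathbb R^{c}$. In the non-degenerate range $\lambda_{rr}<c$ and $\lambda_{cc}<r$ --- which holds for a genuine (incomplete) triple array, since otherwise all rows, or all columns, would coincide as sets and force $v=c$ or $v=r$ --- the first block of equations forces $y$ to be a scalar multiple of $\mathbf 1_r$ and the second forces $z$ to be a scalar multiple of $\mathbf 1_c$. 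Writing $y=\alpha\mathbf 1_r$ and $z=\beta\mathbf 1_c$ collapses the system to a $2\times2$ homogeneous system in $(\alpha,\beta)$ whose coefficient matrix has a strictly positive entry $c+(r-1)\lambda_{rr}$ and is annihilated by $(1,-1)$; therefore it has rank exactly $1$, its kernel is spanned by $(1,-1)$, and so $\ker(M^{\mathsf T}M)$ is one-dimensional. This yields $\operatorname{rank}_{\mathbb R}M=r+c-1\le v$.

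The computation itself is routine linear algebra; the points that need care are the degenerate parameter cases, which must be excluded or checked by hand (and where the bound can genuinely fail, for instance for the $2\times3$ array on $3$ symbols in which every row is complete), and the verification that the collapsed $2\times2$ system has rank $1$ rather than $0$, so that no spurious kernel vectors appear. I would also make explicit the standing convention that a triple array is incomplete, i.e.\ $r,c<v$, as that is precisely what rules out the degenerate cases and makes the statement true as written.
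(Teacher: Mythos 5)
The paper gives no proof of this theorem---it is simply quoted from \cite{MPWY}---so there is nothing internal to compare against; your Gram-matrix rank argument on $M=[\,R\mid C\,]$ is correct and is essentially the standard linear-algebra proof of this bound via the symbol--row and symbol--column incidence matrices and adjusted orthogonality. Your caveat about degenerate cases is also well taken: the statement needs the standing convention (implicit here, and assumed in \cite{MPWY}) that the array is incomplete, i.e.\ $r,c<v$, which forces $\lambda_{rr}<c$ and $\lambda_{cc}<r$ and thereby justifies the collapse of the kernel computation exactly as you describe.
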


Here we consider the extremal case which is relevant for Agrawal's conjecture and the relation to symmetric designs, and for which there are many examples. In the non-extremal case there is only one example known.

\begin{df}\label{BIBDdef} 
A $(v,k,\lambda)$-\emph{balanced incomplete block design} (BIBD) is a pair $(V,\mathcal{B})$ where $V$ is a $v$-set and $\mathcal{B}$ is a collection of $b$ proper $k$-subsets of $V$ called blocks such that each element of $V$ is contained i exactly $r$ blocks and any 2-subset of $V$ is contained in exactly $\lambda$ blocks. A BIBD where $v=b$ is said to be \emph{symmetric} and is abbreviated as SBIBD.
\end{df}

Note that, in the combinatorial literature, it is common to refer to a BIBD as a \emph{$2$-design}, and a SBIBD as a \emph{symmetric} (or \emph{square}) $2$-design.

\begin{anm}
Another, but equivalent way to define a double array is to say that rows and symbols form a BIBD, and that columns and symbols form a BIBD. Moreover, condition TA4 is often called \emph{adjusted orthogonality}.
\end{anm}

Agrawal conjectured a construction of triple arrays from SBIBDs, which we now state, together with its converse (which is true).

\begin{conj}\cite{Agrawal1}
If there is a $(v+1,r,\lambda_{cc})$-SBIBD with $r-\lambda_{cc}>2$, then there is a $TA(v,k,\lambda_{rr},\lambda_{cc},\lambda_{rc}:r\times c)$ with $v=r+c-1$.
\end{conj}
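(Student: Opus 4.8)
The plan is to run in reverse the construction underlying Agrawal's original observation, working directly with the design rather than passing through a Youden square. Write $n=v+1$ and let $\mathcal{D}=(P,\mathcal{B})$ be the given $(n,r,\lambda_{cc})$-SBIBD, so that $|P|=|\mathcal{B}|=n$, every block has size $r$, every point lies in $r$ blocks, any two distinct blocks meet in exactly $\lambda_{cc}$ points, and $r(r-1)=\lambda_{cc}v$. First I would fix a point $\infty\in P$ and set $S=P\setminus\{\infty\}$, a $v$-set to serve as the symbol set. Exactly $r$ blocks pass through $\infty$; call them $B_1,\dots,B_r$, and let $C_1,\dots,C_c$ be the remaining $c=n-r$ blocks, those avoiding $\infty$ (note $c=v-r+1$, i.e.\ $v=r+c-1$ as required). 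I then define the intended column-sets to be the $C_j$ (each an $r$-subset of $S$) and the intended row-sets to be $R_i:=S\setminus B_i$ (each a $c$-subset of $S$, since $|B_i\cap S|=r-1$).

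The point of this choice is that all four intersection parameters are forced by $\mathcal{D}$ before any symbol is placed. Because $\infty\in B_i$ but $\infty\notin C_j$, we get $|R_i\cap C_j|=|C_j\setminus B_i|=r-\lambda_{cc}$ for all $i,j$; likewise $|C_j\cap C_{j'}|=\lambda_{cc}$, $|R_i\cap R_{i'}|=c-r+\lambda_{cc}$, and each symbol lies in exactly $r-\lambda_{cc}$ of the $R_i$ and in exactly $r-\lambda_{cc}$ of the $C_j$. Consequently \emph{any} $r\times c$ array whose $i$-th row is a permutation of $R_i$ and whose $j$-th column is a permutation of $C_j$ satisfies TA1 with $k=r-\lambda_{cc}$, TA2 with $\lambda_{rr}=c-r+\lambda_{cc}$, TA3 with the prescribed $\lambda_{cc}$, and the adjusted-orthogonality condition TA4 with $\lambda_{rc}=r-\lambda_{cc}$, since all of these depend only on the sets $R_i,C_j$ and not on the placement. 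The hypothesis $r-\lambda_{cc}>2$ then guarantees $k,\lambda_{rc}\ge 3$, which is exactly the regime in which one may hope a placement to exist.

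Everything therefore reduces to a single existence problem: can the symbols of $S$ be placed in the $r\times c$ grid so that row $i$ realises the set $R_i$ and column $j$ realises the set $C_j$, with no repetition in any line? Equivalently, I would colour the edges of the complete bipartite graph $K_{r,c}$ (rows against columns) so that edge $(i,j)$ receives a symbol from the list $R_i\cap C_j$, the $c$ edges at each row-vertex $i$ receiving precisely the symbols of $R_i$ and the $r$ edges at each column-vertex $j$ receiving precisely those of $C_j$; equivalently again, I would choose, for every symbol $s$, a perfect matching between the $r-\lambda_{cc}$ rows and the $r-\lambda_{cc}$ columns that contain $s$, in such a way that these matchings partition the cell set. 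The natural tools are Hall's marriage theorem applied symbol by symbol, or a Birkhoff--von Neumann / network-flow decomposition of the incidence tensor whose $(i,j,s)$-entry is $1$ when $s\in R_i\cap C_j$ and $0$ otherwise, into permutation layers, using the slack afforded by $r-\lambda_{cc}>2$ to avoid blockages.

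The hard part will be exactly this last step, and it is where I expect a general argument to stall. List edge-colouring results such as Galvin's theorem, which gives $\chi'_\ell(K_{r,c})=\max(r,c)$, do not apply directly, because the lists $R_i\cap C_j$ have size only $r-\lambda_{cc}$, typically far below $\max(r,c)$, and, more seriously, one must realise the prescribed row- and column-sets \emph{exactly} rather than merely produce some proper colouring. The case $r-\lambda_{cc}=2$, for instance the $(7,3,1)$-design, shows that the filling can genuinely fail, so any proof must use $r-\lambda_{cc}>2$ in an essential, non-local way; a purely greedy or purely degree-counting argument will not suffice. This obstruction is what has kept Agrawal's conjecture open, and it is why in the sequel I do not confront the filling in full generality but instead impose algebraic structure: when $\mathcal{D}$ is developed from a difference set admitting $-1$ as a multiplier, that multiplier furnishes an explicit involutory symmetry of the incidence tensor from which a compatible family of matchings can be read off canonically, yielding the array without solving the general combinatorial existence problem.
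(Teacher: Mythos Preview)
This statement is presented in the paper as a \emph{conjecture}, not a theorem; the paper gives no proof of it and explicitly says that Agrawal's method has an unproven step performed ``by trial and error''. There is therefore no paper proof to compare against.

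Your write-up is a correct reconstruction of Agrawal's reduction: the parameter calculations for $R_i$, $C_j$, and the intersection numbers are all right, and the reformulation as a simultaneous perfect-matching / constrained list-edge-colouring problem is exactly the standard picture. But, as you yourself say in the last two paragraphs, the filling step is precisely the open problem, and neither Hall's theorem, Birkhoff--von~Neumann, nor Galvin's list-chromatic-index bound yields it, because the lists $R_i\cap C_j$ have size only $r-\lambda_{cc}$ and the row- and column-sets must be realised \emph{exactly}. So what you have written is not a proof of the conjecture; it is a (correct) exposition of why the conjecture is hard, followed by an explicit retreat to the difference-set special case that the paper actually handles. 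The only ``gap'' is the one you already name, and it is the whole conjecture.
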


The condition $r-\lambda_{cc}>2$ excludes the Fano plane and its complement, for which it is known that no triple array exists.

\begin{thm}\label{TASBIBDthm}\cite{MPWY}
If there is a $TA(v,k,\lambda_{rr},\lambda_{cc},\lambda_{rc}:r\times c)$ with $v=r+c-1$, then there is a $(v+1,r,\lambda_{cc})$-SBIBD.
\end{thm}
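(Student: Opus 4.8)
The plan is to build the SBIBD directly from the triple array by taking the $v$ symbols together with one extra point, and the $r$ rows together with $c$ extra blocks coming from the columns. First I would note that the rows and symbols of a $TA(v,k,\lambda_{rr},\lambda_{cc},\lambda_{rc}:r\times c)$ form a BIBD: each row, read as the set of its $c$ symbols, is a block of size $c$; by TA1 each symbol lies in $k$ rows, and by TA2 any two symbols lie together in a constant number of rows — call it $\mu$. (One checks $\mu = \lambda_{cc}$ using the standard counting identities, which I'll return to.) Similarly columns and symbols form a BIBD with blocks of size $r$. The idea is to adjoin a new point $\infty$, enlarge each column-block by $\infty$, and use the $r$ row-blocks together with the $c$ enlarged column-blocks as the block set on the point set $V \cup \{\infty\}$, which has $v+1$ points and $r + c = v+1$ blocks — so the design will be square provided it is a $2$-design.

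Next I would verify the $2$-design property on $V \cup \{\infty\}$, checking the three cases for a pair of points. For two original symbols $x,y \in V$: they appear together in $\lambda_{rr}$ rows (TA2) and in $\lambda_{cc}$ columns (TA3), so in $\lambda_{rr} + \lambda_{cc}$ blocks; I would show this equals $r$ using the replication/parameter relations forced by $v = r+c-1$ — indeed, in a symmetric design with block size $r$ one expects $\lambda = \lambda_{cc}$ and a complementary count. For the pair $\{x, \infty\}$ with $x \in V$: $\infty$ lies only in the enlarged column-blocks, so the count is the number of columns containing $x$, which is $k$; I would then show $k = r$, i.e. that each symbol's replication number equals the number of rows — this should follow from $v = r + c - 1$ and the BIBD identities $k(\text{block count in rows}) = \text{(number of cells)}$ etc. So all pairs lie in a constant number $\lambda := r$ of blocks once these numerical coincidences are established; the new block count equals the new point count $v+1$; hence by the standard fact that a $2$-design with $b = v$ is symmetric, we get a $(v+1, r, \lambda_{cc})$-SBIBD, after identifying $\lambda$ with $\lambda_{cc}$.

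The routine-but-essential part, and the place where the hypothesis $v = r+c-1$ is actually used, is the bookkeeping of parameters: from the two BIBDs (rows/symbols and columns/symbols) one gets relations among $v, k, \lambda_{rr}, \lambda_{cc}, \lambda_{rc}, r, c$, and the extremal equation $v = r+c-1$ collapses them enough to force $k = r$ and $\lambda_{rr} + \lambda_{cc} = r$, which are exactly what the $2$-design check above needs. I would carry this out by writing down $rc = vk$ (cell count), $\lambda_{rr}(r-1) = k(c-1)$ (pairs of rows through a symbol... more precisely flags), $\lambda_{cc}(c-1) = k(r-1)$, and $\lambda_{rc} r = kc$ or the analogous flag identity, then substituting $c = v - r + 1$. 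The main obstacle I anticipate is not any single hard idea but getting these identities stated in the right form and making sure the constants genuinely match — in particular confirming that the "$\lambda$" of the constructed symmetric design is $\lambda_{cc}$ and not some other combination — and checking that no block is repeated (the row-blocks have size $c$, the enlarged column-blocks have size $r+1$, but if $r + 1 = c$ one must still check a row-block never coincides with an enlarged column-block, which follows since $\infty$ lies in the latter but not the former). Once the arithmetic is pinned down, the construction and verification are short.
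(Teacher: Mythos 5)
Your plan is built on a misreading of TA2--TA3, and the construction it leads to cannot work. TA2 says that any two \emph{rows} share $\lambda_{rr}$ symbols; it does not say that any two symbols lie together in a constant number of rows. So the BIBD hidden in a triple array has the rows (resp.\ columns) as \emph{points} and the symbols as \emph{blocks} of size $k$, not the other way round; indeed, with $r<v$ your claimed 2-design on the $v$ symbols with only $r$ blocks would violate Fisher's inequality. Because of this, the design you propose on $V\cup\{\infty\}$ fails on every count: its blocks have sizes $c$ and $r+1$, which are not constant (and not equal to $r$) in general; and the ``numerical coincidences'' you hope to extract from $v=r+c-1$ are false. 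By McSorley's parameter result (Corollary~\ref{TAparakor}), $\lambda_{rr}=c-k$ and $\lambda_{cc}=r-k$, so $k<r$ strictly (not $k=r$) and $\lambda_{rr}+\lambda_{cc}=r+c-2k\neq r$ in general. Concretely, for the $TA(15,4,6,2,4:6\times 10)$ arising from a $(16,6,2)$ design, your blocks would have sizes $10$ and $7$, a pair of symbols could lie in $\lambda_{rr}+\lambda_{cc}=8$ blocks while a pair $\{x,\infty\}$ lies in $k=4$, and the target pair-parameter is $\lambda_{cc}=2$, not $r=6$ ($r$ is the replication number of the SBIBD, not its $\lambda$).

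The construction in the cited source (McSorley--Phillips--Wallis--Yucas) goes the dual way. Take as points the $r+c=v+1$ rows and columns of the array. For each symbol $e$ let $B_e$ consist of the rows \emph{not} containing $e$ together with the columns containing $e$; since $e$ occurs $k$ times, $|B_e|=(r-k)+k=r$. Add one further block consisting of all $r$ rows, giving $v+1$ blocks of size $r$ on $v+1$ points. Then, using $\lambda_{rr}=c-k$, $\lambda_{cc}=r-k$, $\lambda_{rc}=k$, one checks the three kinds of point pairs: two rows lie in $(v-2c+\lambda_{rr})+1=r-k$ blocks (the extra block contributing the $+1$), two columns lie in $\lambda_{cc}=r-k$ blocks, and a row and a column lie in $r-\lambda_{rc}=r-k$ blocks. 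All counts equal $\lambda_{cc}$, so one obtains a $(v+1,r,\lambda_{cc})$-SBIBD. If you want to keep a flavour of your idea, note that the extra object needed is an extra \emph{block} (all rows), not an extra point adjoined to the columns, and that the row part of each block must be complemented; without these two changes the counts cannot balance.
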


Agrawal~\cite{Agrawal1} gave a construction method from which many examples have been constructed, but for which one step has not been proven and has to be performed by trial and error. Besides this, only one infinite family called \emph{Paley triple arrays} has been proven to exist. This has been done to different degrees by Seberry~\cite{Seberry}, Street~\cite{Street}, Bagchi~\cite{Bagchi1998} and Preece et al.~\cite{PTA}, and can be summarized as follows.

\begin{thm}
Let $q\geq 5$ be an odd prime power. Then there exists a $q\times(q+1)$ triple array.
\end{thm}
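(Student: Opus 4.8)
The plan is to construct the array explicitly over the field $\mathbb{F}_q$ and to verify the conditions of Definition~\ref{TAdef} by quadratic character sums. First I would fix the parameters that are forced: with $r=q$ and $c=q+1$ one has $v=r+c-1=2q$, each symbol occurs $k=rc/v=\tfrac{q+1}{2}$ times, and the pair numbers come out as $\lambda_{rr}=\tfrac{q+1}{2}$ and $\lambda_{cc}=\tfrac{q-1}{2}$; in particular Agrawal's side condition $r-\lambda_{cc}=\tfrac{q+1}{2}>2$ is \emph{exactly} the hypothesis $q\geq 5$. By Theorem~\ref{TASBIBDthm} the symmetric design behind such an array is a Hadamard design with parameters $(2q+1,q,\tfrac{q-1}{2})$, and one exists for every odd prime power $q$ because a Hadamard matrix of order $2q+2=2(q+1)$ does: Paley's type~II construction gives it when $q\equiv 1\pmod 4$, and the doubled Paley type~I matrix when $q\equiv 3\pmod 4$. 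Since $2q+1$ itself need not be a prime power (e.g.\ $q=25$), I would not develop this design from a difference set on $2q+1$ points, but work directly in $\mathbb{F}_q$: index the $q$ rows by $\mathbb{F}_q$, the $q+1$ columns by the projective line $\mathbb{F}_q\cup\{\infty\}$, and the $2q$ symbols by two copies $a^{+},a^{-}$ of each $a\in\mathbb{F}_q$.

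The technical heart is the choice of entry in each cell. In row $x$ and finite column $y\neq x$ I would place $(x+y)^{\varepsilon}$, where $\varepsilon$ is the sign of the quadratic character $\chi(x-y)$ (set $\chi(0)=0$), and then fill the diagonal cells $x=y$ and the column $\infty$ with the remaining symbols so that no symbol repeats in a row or a column; that this can be arranged rests on the identity $\sum_{x\in\mathbb{F}_q}\chi(x-y)=0$, which balances the two sign classes across every row and column. With such a filling TA1 is immediate, and for TA2 and TA3 it suffices, by the remark following Definition~\ref{BIBDdef}, to check that rows-and-symbols and columns-and-symbols form BIBDs; these are, up to complementation, the derived and residual designs of that Hadamard design, and the count they need --- for distinct rows $y_1,y_2$, the number of $x$ with $\chi(x-y_1)=\chi(x-y_2)$ --- is governed by the Jacobsthal-type identity $\sum_{x}\chi(x-y_1)\chi(x-y_2)=-1$, the same lemma that underlies the Paley conference matrix.

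The real obstacle is TA4, adjusted orthogonality: a fixed row and a fixed column must share a constant number $\lambda_{rc}$ of symbols. This is precisely the step that is carried out by trial and error in Agrawal's general method, and here it has to be forced by the symmetry of the construction. The array is built so as to be invariant under the translations $x\mapsto x+t$ and --- with the symbols permuted to match --- under the scalings $x\mapsto sx$ by nonzero squares $s$, so that the size of the intersection of row~$x$ with column~$y$ depends only on the square class of $x-y$; a final character sum, again of Jacobsthal type, then shows this number is constant. The genuine case split lives only here: for $q\equiv 1\pmod 4$ one has $\chi(-1)=1$, the Paley matrix is symmetric, and the construction is essentially self-symmetric, whereas for $q\equiv 3\pmod 4$ one has $\chi(-1)=-1$ --- the very failure of $-1$ to be a multiplier of the quadratic-residue difference set that is analysed later in this paper --- and there one uses the skew (conference-matrix) form of the Paley construction to absorb the sign. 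I expect almost all of the effort to go into making the $\infty$-column and the diagonal compatible with these symmetries; once that is done, TA1--TA4 reduce to bookkeeping with character sums. This is, in essence, the route followed in \cite{Seberry,Street,Bagchi1998,PTA}.
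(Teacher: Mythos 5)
The paper does not actually prove this statement: it records it as a summary of results established elsewhere (Seberry~\cite{Seberry}, Street~\cite{Street}, Bagchi~\cite{Bagchi1998}, Preece--Wallis--Yucas~\cite{PTA}), so your submission has to stand on its own, and as it stands it is an outline of that known Paley-type route rather than a proof. Your parameter bookkeeping is correct ($k=(q+1)/2$, $\lambda_{rr}=(q+1)/2$, $\lambda_{cc}=(q-1)/2$, $\lambda_{rc}=(q+1)/2$, and $r-\lambda_{cc}>2$ exactly when $q\ge 5$), and the general shape --- rows $\mathbb{F}_q$, columns $\mathbb{F}_q\cup\{\infty\}$, symbols two signed copies of $\mathbb{F}_q$, entries governed by the quadratic character --- is indeed the shape of the constructions in the cited papers. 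But the array is never actually pinned down: the diagonal cells and the $\infty$-column comprise $2q$ entries, and a short count shows each symbol occurs exactly $(q-1)/2$ times in the off-diagonal finite part, so each symbol must occur \emph{exactly once} among those $2q$ unspecified cells. Requiring only ``no repeats in a row or column'' does not determine this filling, so TA1 is not ``immediate'' as claimed, and the TA2/TA3 counts also depend on it.

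The more serious gap is TA4, which you yourself call the real obstacle and which is the only part of the theorem that does not reduce to the existence of the $(2q+1,q,(q-1)/2)$ Hadamard design (existence of that SBIBD is necessary but, by the openness of Agrawal's conjecture, not known to be sufficient). Your symmetry argument, even granted a filling of the diagonal and $\infty$-column compatible with translations and square scalings, only shows that the number of common symbols of row $x$ and finite column $y$ is constant on each of the two square classes of $x-y$; the asserted ``final character sum of Jacobsthal type'' that would make the two values equal, and the separate treatment of the $\infty$-column and of the cases $q\equiv 1,3\pmod 4$, are announced but not carried out. To turn this into a proof you would need to specify the diagonal and $\infty$-column entries explicitly and then verify TA2--TA4 by the character-sum computations you defer --- essentially reconstructing the argument of~\cite{PTA}. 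Note also that this family cannot be obtained from the paper's own machinery (Theorem~\ref{TAthm}): there $v=r+c-1$ is even, while here $r+c-1=2q$ arises from a cyclic Paley difference set, for which $-1$ is never a multiplier (Theorem~\ref{nixcyclicthm}), so citing or reproducing the literature's direct construction is unavoidable.
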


We will use a construction method that starts from Youden squares.

\begin{df}\label{YSdef}
Let $V$ be a set of $v$ symbols. A $k\times v$ \emph{Youden square} is a $k\times v$ array satisfying
\begin{itemize}
\item [YS1.] every symbol of $V$ occurs exactly once in each row, and
\item [YS2.] the symbols in each column constitute a $k$-subset of $V$, and the $v$ such subsets constitute the blocks of a $(v,k,\lambda)$-SBIBD.
\end{itemize}
\end{df}

This construction method, suggested by~\cite{RagNag}, starts from a Youden square and goes directly to the RL-form of a triple array, thereby avoiding the unproven step of Agrawal's method. However, this places strong demands on the structure of the Youden square, and it was shown in~\cite{WYAgrawal} that this method is not even guaranteed to give a double array. But in~\cite{TAYS} the usability of this method was further studied, also with some positive results. That a Youden square can always be constructed from a SBIBD was proven by~\cite{SmithHartley}. But the most straightforward way to construct a Youden square is to develop it from a difference set.\\

In this paper we investigate if there are difference sets which, if developed to Youden squares, give triple arrays by the suggested construction. The rest of the paper is structured as follows: In Sect.~\ref{prelsect} we give preliminary definitions and results about difference sets. In Sect.~\ref{constrsect} we present the construction and prove that having $-1$ as a multiplier is both a necessary and sufficient condition for an abelian difference set to give a triple array. In Sect.~\ref{existencesect} we present known results about existence for such difference sets and give parameters for a new infinite family of triple arrays. In Sect.~\ref{directsect} we present a direct version of the construction and formulate a more general condition for a difference set to give a triple array, suitable for studies of non-abelian sets. Finally, in Sect.~\ref{nonabsect} we analyze the non-abelian case. We show by a computer search on small groups that having $-1$ as weak multiplier, or having a reversible translate, are not sufficient conditions for such a set to give a triple array. 
\end{section}
%============== preliminaries ======================
\begin{section}{Preliminaries}\label{prelsect}
%---------------------------------------------------
We recall some facts from the theory of difference sets.

\begin{df}\label{DSdef} 
Let $G$ be a finite multiplicative group of order $v$, and $D$ a $k$-subset of  $G$. Then $D$ is called a $(v,k,\lambda)$-\emph{difference set} if any non-identity element of $G$ can be written in exactly $\lambda$ ways as $xy^{-1}$ where $x$ and $y$ are in $D$. The \emph{order} of a difference set is the integer $n=k-\lambda$ and $D$ is called non-trivial if $n>1$. We say that $D$ is \emph{cyclic} or \emph{abelian} if $G$ is.
\end{df}
Unless stated otherwise, the term ``difference set'' will here always refer to a non-trivial difference set. Given a difference set, we can find many other related difference sets.
\begin{thm}[cf.~\cite{MoorePoll}]
Let $D$ be a $(v,k,\lambda)$-difference set in $G$. Then its complement $\overline{D}=G\setminus D$ is a difference set in $G$. 
\end{thm}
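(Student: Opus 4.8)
The plan is to count, for each non-identity $g\in G$, the number $N(g)$ of ordered pairs $(x,y)\in\overline{D}\times\overline{D}$ with $xy^{-1}=g$, and to show that $N(g)$ takes the same value for all such $g$. That common value will be the new parameter $\lambda'$, and $\overline{D}$ will turn out to be a $(v,\,v-k,\,\lambda')$-difference set.

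First I would fix $g\neq 1$ and note that $xy^{-1}=g$ is equivalent to $x=gy$, so $y$ ranges freely over $G$ while $x$ is then determined; hence there are exactly $v$ ordered pairs $(x,y)\in G\times G$ with $xy^{-1}=g$. Among these, exactly $k$ have $y\in D$ (one pair $(gy,y)$ for each $y\in D$); rewriting the relation as $y=g^{-1}x$, exactly $k$ have $x\in D$; and, by the defining property of $D$ together with $g\neq 1$, exactly $\lambda$ have both $x\in D$ and $y\in D$. Inclusion–exclusion then gives
\[
N(g)\;=\;v-k-k+\lambda\;=\;v-2k+\lambda ,
\]
independently of $g$, so $\overline{D}$ is a $(v,\,v-k,\,v-2k+\lambda)$-difference set. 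To finish, I would observe that its order is $(v-k)-(v-2k+\lambda)=k-\lambda=n$, the same as the order of $D$, so $\overline{D}$ is non-trivial exactly when $D$ is; and $v-2k+\lambda\ge 0$ (in fact it equals $n(n-1)/\lambda$) by the standard identity $\lambda(v-1)=k(k-1)$.

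There is no serious obstacle here; the only point that needs a little care is checking that each of the three subcounts — $v$ ordered pairs in all, $k$ with $x\in D$, and $\lambda$ with $x,y\in D$ — is genuinely the same for every non-identity $g$, since this is precisely what forces $N(g)$ to be constant. As an alternative, one can carry out the same computation in the integral group ring $\mathbb{Z}G$: writing $D$ also for $\sum_{d\in D}d$ and $D^{(-1)}$ for $\sum_{d\in D}d^{-1}$, the difference-set condition is $D\,D^{(-1)}=n\cdot 1+\lambda\sum_{g\in G}g$; substituting $\overline{D}=\sum_{g\in G}g-D$ and using $\bigl(\sum_{g\in G}g\bigr)D^{(-1)}=k\sum_{g\in G}g$ and $\bigl(\sum_{g\in G}g\bigr)^2=v\sum_{g\in G}g$ yields the corresponding identity for $\overline{D}$, with the same parameters.
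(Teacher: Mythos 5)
Your proof is correct: the inclusion--exclusion count is valid (for fixed $g\neq 1_G$ there are indeed $v$ ordered pairs with $xy^{-1}=g$, of which $k$ have $x\in D$, $k$ have $y\in D$, and $\lambda$ have both), giving the standard parameters $(v,\,v-k,\,v-2k+\lambda)$ for $\overline{D}$, and your checks that the order is again $n=k-\lambda$ and that $v-2k+\lambda=n(n-1)/\lambda$ are accurate. The paper itself states this result only with a citation and gives no proof, so there is nothing to compare against; your counting argument (and the group-ring variant, which matches the characterisation $DD^{(-1)}=n1_G+\lambda G$ used later in the paper) is exactly the standard argument one would find in the cited source.
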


\begin{thm}[cf.~\cite{MoorePoll}]
Let $D\subset G$ be a $(v,k,\lambda)$-difference set.
\begin{enumerate}
	\item For $g\in G$, both $gD$ and $Dg$ are $(v,k,\lambda)$-difference sets.
	\item Let $\alpha$ be an automorphism of $G$. Then $\alpha(D)$ is a $(v,k,\lambda)$-difference set.
\end{enumerate}
\end{thm}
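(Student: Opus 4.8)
The plan is to verify the difference-set property directly from Definition~\ref{DSdef} in each of the three cases, exploiting that left multiplication, right multiplication, and an automorphism are all bijections of $G$ that fix, or at least permute, the non-identity elements.

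For part~(1), I would treat $Dg$ first, as it is the cleaner case. Right multiplication by $g$ is a bijection of $G$, so $|Dg| = k$. Given a non-identity $h \in G$, parametrise the representations $h = x y^{-1}$ with $x, y \in Dg$ by writing $x = d_1 g$ and $y = d_2 g$ with $d_1, d_2 \in D$; then $x y^{-1} = d_1 g g^{-1} d_2^{-1} = d_1 d_2^{-1}$, so the number of such representations equals the number of pairs $(d_1,d_2) \in D \times D$ with $d_1 d_2^{-1} = h$, which is $\lambda$ since $D$ is a $(v,k,\lambda)$-difference set and $h \neq e$. For $gD$, the same bijection argument gives $|gD| = k$; writing $x = g d_1$ and $y = g d_2$ yields $x y^{-1} = g (d_1 d_2^{-1}) g^{-1}$, so $h = x y^{-1}$ precisely when $g^{-1} h g = d_1 d_2^{-1}$. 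Here I would note that conjugation by $g$ is an automorphism of $G$ fixing the identity, so $g^{-1} h g \neq e$, and hence there are exactly $\lambda$ such pairs $(d_1,d_2)$, each giving a distinct pair $(x,y)$. Thus both $gD$ and $Dg$ are $(v,k,\lambda)$-difference sets.

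For part~(2), since $\alpha$ is a bijection, $|\alpha(D)| = k$. For a non-identity $h \in G$, the representations $h = x y^{-1}$ with $x, y \in \alpha(D)$ correspond, via $x = \alpha(d_1)$ and $y = \alpha(d_2)$, to $h = \alpha(d_1) \alpha(d_2)^{-1} = \alpha(d_1 d_2^{-1})$, i.e.\ to pairs $(d_1,d_2) \in D \times D$ with $d_1 d_2^{-1} = \alpha^{-1}(h)$. Since $\alpha^{-1}$ is an automorphism it carries the non-identity element $h$ to a non-identity element, so the count is again exactly $\lambda$.

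There is essentially no obstacle here; the only point requiring a moment's care is the non-abelian subtlety in the $gD$ case, where the differences get conjugated by $g$. The resolution is simply that conjugation permutes the non-identity elements of $G$, so the difference multiset of $gD$ is a relabelling of that of $D$ and the parameters are unchanged. One could alternatively package all three cases by observing that $x \mapsto gx$, $x \mapsto xg$ and $\alpha$ each induce an automorphism of the symmetric design developed from $D$, but the direct count above is the shortest route.
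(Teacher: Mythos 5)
Your proof is correct: the direct count of representations $h=xy^{-1}$, together with the observation that conjugation by $g$ (in the $gD$ case) and $\alpha^{-1}$ (in the automorphism case) permute the non-identity elements of $G$, is exactly the standard argument, and the injectivity of the translation maps ensures the pair counts transfer without loss. The paper itself states this result only as a citation to Moore and Pollatsek and gives no proof, so there is nothing further to compare; your verification fills that in completely.
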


For parameters we have the following fundamental identity.
\begin{thm}[cf.~\cite{MoorePoll}]\label{identitetDS}
Let $D$ be a $(v,k,\lambda)$-difference set in $G$. Then $k(k-1)=\lambda(v-1)$. 
\end{thm}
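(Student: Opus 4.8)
The plan is to prove the identity by double counting the set of ordered pairs of distinct elements of $D$, classifying them according to their ``quotient'' $xy^{-1}$ (the multiplicative analogue of a difference).

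First I would introduce the set $P=\{(x,y):x,y\in D,\ x\neq y\}$. Since $D$ is a genuine $k$-subset of $G$ (not a multiset), there are $k$ choices for $x$ and then $k-1$ remaining choices for $y$, so $|P|=k(k-1)$. Next I would count $|P|$ a second way via the map $(x,y)\mapsto xy^{-1}$. If $(x,y)\in P$ then $x\neq y$, hence $xy^{-1}$ is a non-identity element of $G$; conversely, any representation $g=xy^{-1}$ with $x,y\in D$ of a non-identity $g$ automatically forces $x\neq y$. So the image of the map lands in $G\setminus\{e\}$, and by the defining property of a $(v,k,\lambda)$-difference set each of the $v-1$ non-identity elements of $G$ has exactly $\lambda$ preimages in $P$. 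Summing over these elements gives $|P|=\lambda(v-1)$, and equating the two counts yields $k(k-1)=\lambda(v-1)$.

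There is essentially no obstacle here: the argument is a clean double count. The only point needing (minimal) care is the bookkeeping observation that the constraint $x\neq y$ corresponds precisely to $xy^{-1}$ being a non-identity element, so that no pair is over- or under-counted, and the hypothesis that $D$ is an honest set guarantees the first count is exactly $k(k-1)$.
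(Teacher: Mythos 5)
Your double-counting argument is correct and complete: counting ordered pairs of distinct elements of $D$ once directly ($k(k-1)$) and once via the map $(x,y)\mapsto xy^{-1}$ ($\lambda$ preimages for each of the $v-1$ non-identity elements) is exactly the standard proof of this identity. The paper does not prove this theorem itself but cites it (cf.\ Moore--Pollatsek), and your argument is essentially the one given in that reference, so there is nothing to add.
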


We will need the notion of reversible difference sets.
\begin{df}
Let $S$ be a subset of a group $G$. Then $S^{(-1)}$ is the set of inverses of the elements of $S$, that is $S^{(-1)}=\{x^{-1}:x\in S\}$.
\end{df}

\begin{df}\label{revDdef}
A difference set $D$ is called \emph{reversible} if $D=D^{(-1)}$. 
\end{df}

Difference sets can be used to construct SBIBDs.
\begin{df}\label{devDdef}
Let $X$ be a subset of a finite group $G$. For any $g\in G$, define $Xg=\{xg:x\in G\}$. We call any set $Xg$ a \emph{right translate} of $X$ and define the \emph{development of} $X$, denoted $\dev(X)$, to be the collection of all right translates of $X$.
\end{df}

\begin{thm}[cf.~\cite{MoorePoll}]\label{devDSBIBDthm}
Let $D\subset G$ be a $(v,k,\lambda)$-difference set. Then $(G,\dev(D))$ is a $(v,k,\lambda)$-SBIBD. 
\end{thm}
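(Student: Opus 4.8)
The plan is a direct double count of incidences, taking care to work consistently with the \emph{right} translates of Definition~\ref{devDdef}. Put $V=G$, so $|V|=v$, and let the blocks be the members of $\dev(D)=\{Dg:g\in G\}$. Since $x\mapsto xg$ is a bijection of $G$, each $Dg$ is a $k$-subset of $G$, and it is a \emph{proper} subset because non-triviality forces $0<k<v$ (if $k=v$ then $D=G$ and $n=k-\lambda$ would not exceed $1$). So the only things left to verify are the replication number, the index $\lambda$, and that the $v$ translates are pairwise distinct, so that $b=v$ and the design is genuinely symmetric.

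First I would fix a point $x\in G$ and determine the blocks through it: $x\in Dg$ iff $xg^{-1}\in D$ iff $g\in D^{(-1)}x$, so the blocks containing $x$ are exactly $\{Dg:g\in D^{(-1)}x\}$, and there are $|D^{(-1)}x|=k$ of them; hence every point lies in $r=k$ blocks. Next I would fix two distinct points $x,y\in G$ and count the blocks containing both: $x,y\in Dg$ iff $a:=xg^{-1}$ and $b:=yg^{-1}$ both lie in $D$, and then $ab^{-1}=xg^{-1}gy^{-1}=xy^{-1}$, while conversely $g=b^{-1}y$ is recovered from the pair $(a,b)$. Thus the blocks through $\{x,y\}$ correspond bijectively to the representations $xy^{-1}=ab^{-1}$ with $a,b\in D$, of which there are exactly $\lambda$ since $x\neq y$ makes $xy^{-1}$ a non-identity element. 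As this count does not depend on the chosen pair, $(V,\dev(D))$ is a $2$-design with index $\lambda$; Theorem~\ref{identitetDS} then makes the parameters $(v,k,\lambda)$ mutually consistent.

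For distinctness of the translates, suppose $Dg=D$ with $g\neq e$. Then $Dh=D$ for every $h$ in $H=\langle g\rangle$, so $dH\subseteq D$ for each $d\in D$, i.e.\ $D$ is a union of left cosets of $H$; feeding this back into the representation count for a non-identity element of $H$ forces $\lambda\geq k$, contradicting $n=k-\lambda>1$. Hence the $v$ translates are distinct, $b=v$, and $(G,\dev(D))$ is a $(v,k,\lambda)$-SBIBD. The only place where any real care is needed is this last step in the non-abelian case, where one must keep left and right cosets apart and, when $H$ is not normal, pass to a suitable conjugate before running the counting argument; everything else is routine bookkeeping and, in the abelian case of primary interest here, the normality issue does not arise at all.
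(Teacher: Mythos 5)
The paper states this theorem with a ``cf.'' citation to \cite{MoorePoll} and gives no proof of its own, so your argument has to be judged on its own merits. The core of it is fine: the observation that $x\in Dg$ iff $g\in D^{(-1)}x$ gives replication $r=k$, and your bijection between the blocks through a pair $\{x,y\}$ and the representations $xy^{-1}=ab^{-1}$ with $a,b\in D$ gives index $\lambda$; both counts are correct. The genuine gap is in the last step, distinctness of the translates, outside the abelian case. From $Dg=D$, $g\neq 1_G$, you correctly get that $D$ is a union of left cosets of $H=\langle g\rangle$, but the claim that this ``forces $\lambda\geq k$'' for a non-identity $h\in H$ only works when the elements of $D$ commute with $h$: the natural pairs are $(dh,d)$ with $d\in D$, and $(dh)d^{-1}=dhd^{-1}$, which is a conjugate of $h$, not $h$ itself. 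So in a non-abelian group these $k$ pairs scatter over several conjugates and no single element is shown to have more than $\lambda$ representations. ``Pass to a suitable conjugate'' does not rescue this: if $H$ is normal you must count all of $H\setminus\{1_G\}$ at once (getting $\lambda(|H|-1)=k(|H|-1)$, which does give the contradiction), and if $H$ is not normal the same-coset pairs land in several conjugate subgroups $dHd^{-1}$ and the count only yields $k\leq\lambda t$ with $t$ the number of such conjugates, which is no contradiction at all. Since the paper applies the development construction to arbitrary groups, including the non-abelian ones of Theorem~\ref{DAthm} and Section~\ref{nonabsect}, this case cannot be dismissed as a side issue.

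The standard way to close the gap is via the group ring. By Theorem~\ref{DDinvthm}, $DD^{(-1)}=n1_G+\lambda G$; since $n\neq 0$ and $n+\lambda v=k^2\neq 0$ (Theorem~\ref{identitetDS}), the element $n1_G+\lambda G$ is invertible in $\mathbb{Q}G$, hence $D$ is invertible, and because the element $G$ is central one gets $D^{(-1)}D=D^{-1}(DD^{(-1)})D=n1_G+\lambda G$ as well --- this is exactly Bruck's left--right equivalence quoted in item (a) of Section~\ref{nonabsect}. Now if $Ds=D$ with $s\neq 1_G$, the $k$ distinct pairs $(a,as)$, $a\in D$, satisfy $a^{-1}(as)=s$, so the coefficient of $s$ in $D^{(-1)}D$ is at least $k>\lambda$, a contradiction; hence all $v$ translates are distinct and $b=v$. (Alternatively, your counts already show the indexed family $(Dg)_{g\in G}$ is a $2$-design with $b=v$, and a symmetric $2$-design cannot have repeated blocks by the incidence-matrix identity $N^{T}N=nI+\lambda J$.) With either repair your proof is complete; as written, it is only complete for abelian $G$.
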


Studying difference sets often means dealing with multisets, and then the following algebraic structure can be useful.
\begin{df}\label{ZGdef}
Let $G$ be a finite multiplicative group. The \emph{integral group ring} $\mathbb{Z}G$ consists of formal sums $\sum_{g\in G}a_gg$ where $a_g\in\mathbb{Z}$. Addition and multiplication are defined as follows:
$$\sum_{g\in G}a_gg+\sum_{g\in G}b_gg=\sum_{g\in G}(a_g+b_g)g$$
$$\left(\sum_{f\in G}a_ff\right)\left(\sum_{g\in G}b_gg\right)=\sum_{h\in G}\left(\sum_{fg=h\in G}a_fb_g\right)h.$$
\end{df}
Let $S\subseteq G$, then it is a standard convention to abuse notation and write also the corresponding group ring element as $S$, that is $S:=\sum_{g\in S}g$. In $\mathbb{Z}G$, if $A=\sum_{g\in G}a_gg$ and $t$ any integer, then we define $A^{(t)}:=\sum_{g\in G}a_gg^t$. In particular, $A^{(-1)}=\sum_{g\in G}a_gg^{-1}$. A useful notation concerning cardinality is $|A|=\sum_{g\in G}a_g$. We write the identity in $G$ as $1_G$ to distinguish it from the integer 1. Further, $\mathbb{Z}G$ is a ring with identity $1_G$ and is commutative if and only if $G$ is abelian.

\begin{thm}[cf.~\cite{MoorePoll}]\label{DDinvthm}
Let $D$ be a non-empty proper subset of a group $G$ with $|D|=k$ and $|G|=v$. Then $D$ is a $(v,k,\lambda)$-difference set of order $n$ if and only if
$$DD^{(-1)}=n1_G+\lambda G$$
holds in $\mathbb{Z}G$.
\end{thm}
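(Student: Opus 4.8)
The plan is to expand the product $DD^{(-1)}$ in $\mathbb{Z}G$ via the multiplication rule of Definition~\ref{ZGdef} and then compare the coefficient of each group element on the two sides of the claimed identity. Identifying $D$ with $\sum_{x\in D}x$ and $D^{(-1)}$ with $\sum_{y\in D}y^{-1}$, one has
$$DD^{(-1)}=\sum_{x\in D}\sum_{y\in D}xy^{-1}=\sum_{g\in G}c_g\,g,\qquad c_g=\bigl|\{(x,y)\in D\times D: xy^{-1}=g\}\bigr|.$$
The key point is that $c_g$ is exactly the number of representations of $g$ as a difference $xy^{-1}$ of elements of $D$, which is the quantity governed by Definition~\ref{DSdef}.

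First I would dispose of the identity element separately: $xy^{-1}=1_G$ forces $x=y$, so $c_{1_G}=|D|=k$ for \emph{any} subset $D$, independently of the difference-set property. For the forward implication, assume $D$ is a $(v,k,\lambda)$-difference set of order $n=k-\lambda$. Then by definition $c_g=\lambda$ for every $g\neq 1_G$, and using $G=\sum_{g\in G}g$ in $\mathbb{Z}G$ we get
$$DD^{(-1)}=k\,1_G+\lambda\sum_{g\in G,\,g\neq 1_G}g=k\,1_G+\lambda(G-1_G)=(k-\lambda)1_G+\lambda G=n\,1_G+\lambda G.$$

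For the converse, assume $DD^{(-1)}=n\,1_G+\lambda G$. Writing the right-hand side as $(n+\lambda)1_G+\lambda\sum_{g\neq 1_G}g$ and comparing coefficients: the coefficient of $1_G$ gives $k=c_{1_G}=n+\lambda$, hence $n=k-\lambda$; and the coefficient of an arbitrary $g\neq 1_G$ gives $c_g=\lambda$. The latter says precisely that every non-identity element of $G$ has exactly $\lambda$ representations as a difference from $D$, i.e. $D$ is a $(v,k,\lambda)$-difference set, whose order is $k-\lambda=n$. There is no genuine obstacle in this argument; the only thing requiring a little care is the bookkeeping that isolates the identity coefficient (which is automatically $k$) from the coefficients at non-identity elements, together with the routine but standard identification of the subset $D$ with its associated element of $\mathbb{Z}G$.
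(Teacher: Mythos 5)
Your argument is correct and is exactly the standard coefficient-comparison proof in the group ring: the paper itself does not prove this statement but cites it from the literature, and the cited proof is the same expansion of $DD^{(-1)}$ with the identity coefficient equal to $k$ and the non-identity coefficients counting difference representations. Nothing further is needed.
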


\begin{df}\label{multdef}
Let $D$ be a difference set in a group $G$. An automorphism $\phi$ of $G$ is called a \emph{multiplier} of $D$ if $\phi(D)=aDb$ for some $a,b\in G$. If $a=1$, then $\phi$ is called a \emph{right multiplier}. If $G$ is abelian and $\phi$ is on the form $\phi_t:x\mapsto x^t$ for some integer $t$, then $\phi$ is called a \emph{numerical multiplier}. It is common practice to  abuse terminology and call $t$ itself a \emph{numerical multiplier}. 
\end{df}

For $\phi_t:x\mapsto x^t$ we write $\phi_t(D)=D^{(t)}=\{x^t:x\in D\}$. We will take particular interest in the inverse mapping $x\mapsto x^{-1}$. It is an automorphism of every abelian group but quite rare as multiplier. In non-abelian groups this mapping is not a homomorphism so it cannot be a multiplier, but is called a \emph{weak multiplier} if $D^{(-1)}$ is a right translate of $D$.

\begin{thm}[cf.~\cite{BJL}]\label{fixttranslate}
Let $D$ be a $(v,k,\lambda)$-difference set in the abelian group $G$. Then there exists a translate of $D$ which is fixed by every numerical multiplier of $D$.
\end{thm}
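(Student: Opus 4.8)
The plan is to recast the statement as the vanishing of a $1$-cocycle and then produce the required translate from the defining identity of $D$, settling the case $\gcd(k,\exp G)=1$ completely and indicating how the difference-set structure disposes of the rest.

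\textbf{Step 1 (reduction).} The numerical multipliers of $D$ form an abelian subgroup $M\le\operatorname{Aut}(G)$, since $\phi_s\phi_t=\phi_{st}$. As $D$ is non-trivial, $\dev(D)$ is a $(v,k,\lambda)$-SBIBD with $v$ pairwise distinct blocks (Theorem~\ref{devDSBIBDthm}), so for each $t\in M$ there is a \emph{unique} $g_t\in G$ with $D^{(t)}=Dg_t$, and $\phi_{st}=\phi_s\phi_t$ forces the cocycle identity $g_{st}=g_s\,\phi_s(g_t)$. Moreover $\phi_t(Dg)=D^{(t)}\phi_t(g)=Dg_t\,\phi_t(g)$, so, again by distinctness of the blocks, $Dg$ is fixed by $\phi_t$ exactly when $g_t=g\,\phi_t(g)^{-1}$. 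Hence the theorem is equivalent to the existence of a single $g\in G$ with $g_t=g\,\phi_t(g)^{-1}$ for all $t\in M$ --- that is, the cocycle $t\mapsto g_t$ is a coboundary. (Consistently with this, passing from $D$ to a translate $Dg$ replaces each $g_t$ by $g_t\,\phi_t(g)\,g^{-1}$, so we are free to seek the fixed translate among all translates of $D$.)

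\textbf{Step 2 (the coprime case).} Put $d=\prod_{x\in D}x\in G$, and compute the product of all entries of $D^{(t)}$ in two ways: via $D^{(t)}=\phi_t(D)$ it equals $\phi_t(d)$, and via $D^{(t)}=Dg_t$ it equals $d\,g_t^{\,k}$. Thus $g_t^{\,k}=d^{-1}\phi_t(d)$ for every $t\in M$, whence $(g\,\phi_t(g)^{-1})^k=g^k\,\phi_t(g^k)^{-1}=d^{-1}\phi_t(d)=g_t^{\,k}$ for any $g$ with $g^k=d^{-1}$. If $\gcd(k,\exp G)=1$ the $k$-th power map is an automorphism of $G$, so such a $g$ exists and $k$-th powers may be cancelled, yielding $g_t=g\,\phi_t(g)^{-1}$ simultaneously for all $t\in M$; equivalently, the unique translate of $D$ whose entries multiply to $1_G$ is fixed by every numerical multiplier. (If desired, $g_t$ itself is visible from $D^{(t)}D^{(-1)}=DD^{(-1)}g_t=n\,g_t+\lambda G$ via Theorem~\ref{DDinvthm}, but this is not needed above.)

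\textbf{Step 3 (the general case, and the main obstacle).} When $\gcd(k,\exp G)>1$ the computation of Step~2 determines $g_t$ only modulo the $k$-torsion subgroup $G[k]$, and a residual $G[k]$-valued cocycle must still be trivialised. This is the crux: it cannot be handled by the cohomology of $M$ on $G$ alone, since $\gcd(|M|,v)$ need not be $1$, so the difference-set structure has to re-enter. The plan is to induct on $|M|$: choose $N\le M$ of prime index, use the inductive hypothesis to replace $D$ by an $N$-fixed translate, then pass to the subgroup of points of $G$ fixed by $N$ and to the $N$-fixed blocks of $\dev(D)$ --- on which $M/N$ acts cyclically --- iterating, and invoking at each stage the relation $DD^{(-1)}=n1_G+\lambda G$ (Theorem~\ref{DDinvthm}) together with the parameter identity $k(k-1)=\lambda(v-1)$ (Theorem~\ref{identitetDS}) and $n=k-\lambda$ to control the relevant power maps; an alternative is a direct character-sum count showing that the number of $M$-fixed translates is positive. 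Checking that the fixed substructure retains the required form and that the $G[k]$-contribution genuinely vanishes is the delicate part --- this is essentially the argument of McFarland and Rice recorded in~\cite{BJL}.
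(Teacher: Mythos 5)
Your Steps 1 and 2 are correct: the reformulation as a cocycle problem is accurate (uniqueness of $g_t$ is legitimate because translates of a non-trivial difference set are pairwise distinct), and the ``product of the elements of $D$'' normalisation does prove the theorem when $\gcd(k,\exp G)=1$, giving the translate whose entries multiply to $1_G$. But that is only a special case, and it is precisely the case that is irrelevant here: every difference set this paper applies the theorem to (the $(16,6,2)$, $(36,15,6)$, the Hadamard family $(4u^2,2u^2\pm u,u^2\pm u)$, and $(4000,775,150)$) has $\gcd(k,\exp G)>1$ (indeed, by Theorem~\ref{identitetDS}, any prime dividing both $k$ and $v$ also divides $\lambda$ and $n$, and for reversible sets $v$ is even by Theorem~\ref{veventhm}). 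So the theorem's actual content lies entirely in the case your argument does not reach.

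Step 3 is where the genuine gap sits: it is a plan, not a proof, and its delicate points are exactly the theorem itself. After Step 2 you know $g_t^k$, and trivialising the residual $G[k]$-valued cocycle is the whole problem; as you note, cohomological vanishing is unavailable since $\gcd(|M|,|G|)$ need not be $1$. The proposed induction --- replace $D$ by an $N$-fixed translate, then pass to the $N$-fixed points and $N$-fixed blocks --- does not obviously hand you back a difference set or even a symmetric design on the fixed substructure, so ``checking that the fixed substructure retains the required form'' is not a routine verification but the missing argument; likewise the ``character-sum count'' alternative merely restates that the set of $M$-fixed translates (which is either empty or a coset of $\bigcap_{t\in M}\{g:g^{t-1}=1_G\}$) is non-empty, which is the assertion to be proved. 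Ending with ``this is essentially the argument of McFarland and Rice recorded in~\cite{BJL}'' is an appeal to the literature, not a proof. For calibration: the paper itself offers no proof of this statement either --- it is quoted from~\cite{BJL} (the McFarland--Rice theorem) --- so nothing in the paper is contradicted; but judged as a self-contained proof, your attempt establishes only the coprime case.
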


Hence, $-1$ is a multiplier of $D$ if and only if $D$ has a reversible translate. Also, note that if $-1$ is a multiplier of $D$, then it also a multiplier of every translate of $D$. To see this, suppose $(Dg)^{(-1)}=Dg$. Then $ g^{-1}D^{(-1)}=Dg$, so $D^{(-1)}=Dg^2$.\\

Finally, we will need the following two results for calculating parameters in triple arrays.

\begin{lma}\cite{McS}\label{lambda_rclma}
Suppose $\mathcal{A}$ is a $TA(v,k,\lambda_{rr}.\lambda_{cc},\lambda_{rc}:r\times c)$ with $v=r+c-1$. Then $\lambda_{rc}=r-\lambda_{cc}$.
\end{lma}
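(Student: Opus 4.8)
The plan is to count, in two different ways, the incidences between the cells of a fixed row and the symbols, using the various $\lambda$-parameters of the triple array together with the extremal relation $v = r+c-1$. First I would recall from the remark that columns and symbols form a BIBD: since every symbol appears $k$ times in the array and each occurrence lies in one of the $c$ columns, this BIBD has $v$ points, $c$ blocks of size $r$ (each column is a block, since columns have $r$ cells and no repeated symbol), replication number $k$, and pairwise concurrence $\lambda_{cc}$. Applying the standard BIBD identities to this ``column design'' gives $rc = vk$ and $\lambda_{cc}(v-1) = k(r-1)$, and similarly the ``row design'' (rows and symbols) gives $rk = \ldots$, more usefully $\lambda_{rr}(v-1) = k(c-1)$.

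Next I would fix a single row $R$ of the array; it occupies $c$ cells and contains $c$ distinct symbols. I count pairs $(x, j)$ where $x$ is one of the $c$ symbols in $R$ and $j$ is a column, weighting by whether $x$ occurs in column $j$: summing over the $c$ symbols of $R$, each occurs $k$ times total, but one of those occurrences is the cell in $R$ itself contributing to no "other" column — actually the cleanest route is to count incidences between the $c$ columns and the symbol set of $R$. By condition TA4, row $R$ and any column $j$ share exactly $\lambda_{rc}$ symbols, so the number of (column $j$, symbol of $R$)-incidences with the symbol actually appearing in that column is $\sum_j \lambda_{rc} = c\lambda_{rc}$. On the other hand, count the same quantity by symbols: a symbol $x$ of $R$ appears in $k$ columns of the array; but I want those columns $j$ such that $x \in R$ and $x$ occurs in column $j$ — that is just $k$ for each of the $c$ symbols, giving $ck$. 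That over-counts, so instead I should count occurrences of symbols of $R$ in columns \emph{other than the column of that occurrence in $R$}; still this needs care.

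The correct two-way count, which I would carry out, is the following: fix row $R$, and for each \emph{other} row $R'$ count triples via the symbols common to $R$ and a column. Here is the clean version. Consider the $c$ columns; in column $j$ let $S_j$ be its set of $r$ symbols. Count $\sum_j |S_j \cap (\text{symbols of } R)|$. By TA4 this equals $c \lambda_{rc}$. Now compute the same sum by fixing a symbol $x \in R$ and asking in how many columns $x$ appears: that is exactly its replication $k$ in the column design, but note each symbol of $R$ lies in some cell of $R$, hence in the column through that cell, so among these $k$ columns one is "the home column" — this does not matter for the count, we simply get $\sum_{x \in R} k = ck$. Wait: this would give $\lambda_{rc} = k$, which is false, so the resolution (the actual obstacle) is that symbols of $R$ occupy only $c$ of the cells but lie in columns repeatedly; the honest identity is $c\lambda_{rc} = \sum_{x \in R}(\text{number of columns containing } x) $. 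Since the array restricted to its occupied cells has every symbol appearing $k$ times and these $ck$ occurrences of $R$'s symbols... one must instead use that the TOTAL number of occupied cells is $rc$ and total symbol-occurrences is $vk$, hence $rc = vk$; combined with $v = r+c-1$ and the BIBD identity $k(r-1) = \lambda_{cc}(v-1) = \lambda_{cc}(r+c-2)$, I would solve the resulting linear system together with $c\lambda_{rc} = $ (incidence count $= $ the number of occupied cells in columns that also meet $R$'s symbol set, which by a direct double count equals $r c - $ corrections) to isolate $\lambda_{rc}$. The main obstacle is bookkeeping the right incidence structure so that the over-counting is handled correctly; once the correct sum is set up, substituting $v = r+c-1$ and the two BIBD parameter identities reduces everything to elementary algebra yielding $\lambda_{rc} = r - \lambda_{cc}$.
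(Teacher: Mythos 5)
Your central double count is in fact correct, and the point where you abandon it is exactly where the proposal breaks down. Fix a row $R$; it contains $c$ distinct symbols. Summing $|S_j\cap R|$ over the $c$ columns gives $c\lambda_{rc}$ by TA4. Counting the same incidences symbol by symbol, each of the $c$ symbols of $R$ occurs $k$ times in the array, necessarily in $k$ distinct columns (no symbol repeats in a column), so the sum is $ck$. Hence $\lambda_{rc}=k$. You reject this as ``false'', but it is true --- it is precisely what Corollary~\ref{TAparakor} records for the case $v=r+c-1$, and the count does not even use $v=r+c-1$. Having discarded the correct conclusion, the remainder of your plan (``solve the resulting linear system'') never produces a usable identity for $\lambda_{rc}$, so the argument is not completed.

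The second genuine error is in the BIBD identities you quote. TA3 is a condition on the intersection of two \emph{columns} (blocks), not on the concurrence of two \emph{symbols} (points); the design in which $\lambda_{cc}$ plays the role of the BIBD parameter $\lambda$ is the dual one, with the $c$ columns as points, the $v$ symbols as blocks of size $k$, and replication $r$. The correct identity is therefore $\lambda_{cc}(c-1)=r(k-1)$, not $\lambda_{cc}(v-1)=k(r-1)$; the latter would require the symbol--column design to be symmetric ($c=v$) and already fails for the $5\times 6$ triple array on $10$ symbols, where $\lambda_{cc}=2$ but $k(r-1)=12\neq 2\cdot 9$. (Your $\lambda_{rr}(v-1)=k(c-1)$ has the same defect.) With the correct ingredients the lemma is immediate: $\lambda_{rc}=k$ from your count, and since $rc=vk$ and $v=r+c-1$ give $(r-k)(c-1)=rc-kc-r+k=vk-kc-r+k=rk-r=r(k-1)$, the identity $\lambda_{cc}(c-1)=r(k-1)$ forces $\lambda_{cc}=r-k$, whence $\lambda_{rc}=k=r-\lambda_{cc}$. (For what it is worth, the paper itself gives no proof of this lemma --- it cites McSorley --- so the comparison here is with a corrected version of your own route rather than with a proof printed in the paper.)
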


\begin{kor}\cite{McS}\label{TAparakor}
When $v=r+c-1$ every triple array is a $TA(v,k,c-k,r-k,k:r\times c)$.
\end{kor}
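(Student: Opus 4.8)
The plan is to pin down every parameter by elementary double counting, invoking only the equireplication condition TA1, the intersection conditions TA2 and TA3, and the extremality hypothesis $v=r+c-1$, and then to read off the last parameter from Lemma~\ref{lambda_rclma}. Throughout I would use the observation that, since no symbol is repeated within a row or within a column, each of the $v$ symbols occupies exactly $k$ distinct rows and exactly $k$ distinct columns.

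First I would record the replication identity: counting the $rc$ cells of the array once as rows-times-columns and once as symbols-times-replication (TA1) gives $rc=vk$. Next, count the incidences $(\{i,j\},s)$, where $\{i,j\}$ ranges over unordered pairs of distinct rows and $s$ is a symbol occurring in both. Summing over row-pairs and applying TA2 gives $\binom{r}{2}\lambda_{rr}$; summing over symbols and using that each symbol lies in exactly $k$ rows gives $v\binom{k}{2}$. Equating and clearing binomials yields $r(r-1)\lambda_{rr}=vk(k-1)$; substituting $vk=rc$ and $r+c=v+1$ and simplifying gives $\lambda_{rr}=c-k$. The verbatim argument with columns in place of rows, now using TA3, gives $c(c-1)\lambda_{cc}=vk(k-1)$ and hence $\lambda_{cc}=r-k$. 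Finally Lemma~\ref{lambda_rclma} gives $\lambda_{rc}=r-\lambda_{cc}=k$, so the array is a $TA(v,k,c-k,r-k,k:r\times c)$ as claimed.

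There is no real obstacle here beyond care with the bookkeeping and the algebraic reduction: the simplification of $r(r-1)\lambda_{rr}=vk(k-1)$ to $\lambda_{rr}=c-k$ genuinely uses \emph{both} $rc=vk$ and $v=r+c-1$, and neither alone is enough. It is worth remarking that the values of $\lambda_{rr}$ and $\lambda_{cc}$ obtained this way already hold for any double array with $v=r+c-1$; only the evaluation $\lambda_{rc}=k$ requires a genuine triple array, and that is exactly where Lemma~\ref{lambda_rclma} enters.
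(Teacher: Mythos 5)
Your proof is correct. The paper itself gives no argument for this corollary --- it is quoted from McSorley \cite{McS} together with Lemma~\ref{lambda_rclma} --- so there is no internal proof to compare with, but your self-contained double counting is the standard route and it goes through: $rc=vk$ from counting occupied cells, $r(r-1)\lambda_{rr}=vk(k-1)$ and $c(c-1)\lambda_{cc}=vk(k-1)$ from counting incidences of (pair of rows, common symbol) and (pair of columns, common symbol), and then substituting $vk=rc$ and $r+c=v+1$ gives $\lambda_{rr}=c-k$ and $\lambda_{cc}=r-k$, after which Lemma~\ref{lambda_rclma} yields $\lambda_{rc}=r-\lambda_{cc}=k$. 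Two small points are worth making explicit: the identity $rc=vk$ uses that a triple array (unlike its RL form) has every cell filled, which is the intended reading of Definition~\ref{TAdef}; and the divisions by $r-1$ and $c-1$ presuppose $r,c\ge 2$, which is harmless since otherwise TA2 or TA3 is vacuous and the array is degenerate. Your closing observation --- that the values of $\lambda_{rr}$ and $\lambda_{cc}$ already hold for any double array with $v=r+c-1$, and only the evaluation $\lambda_{rc}=k$ genuinely needs TA4, entering through Lemma~\ref{lambda_rclma} --- is accurate and is exactly how the cited source organizes the result.
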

\end{section}
%=================================================
%====== TAs from abelian difference sets =========
\begin{section}{Triple arrays from abelian difference sets}\label{constrsect}
%---------------------------------------------------

In this section we construct triple arrays from Youden squares developed from abelian difference sets admitting $-1$ as a multiplier, and prove that this is both a necessary and sufficient condition for the construction to work. By Theorem~\ref{devDSBIBDthm} it is straightforward to construct a Youden square  from a difference set.

\begin{constr}\label{Yconstr}
Let $D$ be a $(v,k,\lambda)$-difference set in a finite multiplicative group $G$. Let $Y$ be a $k\times v$ array where the columns are inxeded by the elements $j\in G$. Write the elements of $D$ in an arbitrary order in column $1$, and let each row be indexed by the element $i\in D$ in column 1. Then, take $Y(i,j)=ij$, where $Y(i,j)$ denotes the element in position $(i,j)$ in $Y$.   
\end{constr}

From a Youden square we construct the RL-form of an array as suggested in~\cite{RagNag}.

\begin{constr}\label{Rconstr}
Let $Y$ be a Youden square, and $C$ a column in $Y$. Let $Y^{[C]}$ be produced from $Y$ by deleting $C$ from $Y$ together with all the symbols in $C$.
\end{constr}

As mentioned before, applying Construction~\ref{Rconstr} to any Youden square $Y$ does not even guarantee that we get the RL-form of a double array. But here $Y$ is developed from a difference set. 
\begin{thm}\label{DAthm} 
Let $D$ be a $(v,k,\lambda)$-difference set in a group $G$. Then there is a $k\times(v-k)$ double array.
\end{thm}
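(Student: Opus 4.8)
I want to show that $Y^{[C]}$, obtained by deleting one column $C$ from the Youden square $Y$ of Construction~\ref{Yconstr} together with the $k$ symbols appearing in $C$, satisfies RTA1--RTA3, i.e. is the RL-form of a double array on $v-k$ symbols in a $k\times(v-k)$ array. Since the roles are: rows of $Y^{[C]}$ $\leftrightarrow$ rows of the array, columns of $Y^{[C]}$ $\leftrightarrow$ columns of the array, symbols of $Y^{[C]}$ $\leftrightarrow$ symbols of the array, I must check: (RTA1) every column has $k$ occupied cells; (RTA2) every pair of rows is jointly occupied in $\lambda_{rr}$ columns; (RTA3) every pair of the remaining $v-k$ symbols occurs together in $\lambda_{cc}$ columns. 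By the translate remarks after Theorem~\ref{fixttranslate}, the construction is insensitive to which translate of $D$ we take and which column we delete, so I may assume $C$ is the column indexed by $1_G$, whose symbol set is exactly $D$; the deleted symbols are $D$ and the surviving symbols are $\overline D=G\setminus D$.

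First I would read off the two underlying designs. By YS2 the columns of $Y$ (as $k$-subsets of $G$) form $\dev(D)$, a $(v,k,\lambda)$-SBIBD (Theorem~\ref{devDSBIBDthm}); deleting one block and the $k$ points on it leaves a $2$-design on $v-k$ points with $v-1$ blocks, where each surviving point lies on $r-\lambda = k-\lambda = n$ blocks (a point $p\notin D$ meets the deleted block $D$ in $\lambda$ blocks, so survives in $k-\lambda$) and each surviving pair still lies on $\lambda$ blocks — this is the residual design, and it gives RTA3 with $\lambda_{cc}=\lambda$. Dually, by YS1 each row of $Y$ contains every symbol once, so the rows of $Y$ together with the symbols form the trivial $1$-design; restricting a row to the surviving columns and surviving symbols, row $i$ is occupied in a column $j\neq 1$ iff $ij\notin D$, i.e. in $v-k$ columns — but I actually need RTA1 (column counts) and RTA2 (row-pair counts). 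For RTA1: a surviving column $j$ had symbol set $Dj$; after we delete the symbols of $D$, the occupied cells of column $j$ correspond to $Dj\cap \overline D$, which has size $k-|Dj\cap D|=k-\lambda=n$ — wait, that is $n$, not $k$; so in fact RTA1 holds with the value $n=k-\lambda$, consistent with Corollary~\ref{TAparakor} read in RL-form (the ``$k$'' of RTA1 is $c-k$ of the array, here $(v-k)-$something). I will simply compute each count via $DD^{(-1)}=n1_G+\lambda G$ (Theorem~\ref{DDinvthm}) and record the resulting parameters rather than prejudging their names.

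The cleanest route is to do everything inside $\mathbb{Z}G$. Index rows by $i\in D$ and columns by $j\in G\setminus\{1_G\}$, with an occupied cell at $(i,j)$ precisely when $ij\notin D$, i.e. $ij\in\overline D$. For RTA2, the number of columns $j$ in which rows $i_1,i_2\in D$ are both occupied is $|\{j\neq 1: i_1 j\in\overline D,\ i_2 j\in\overline D\}|$; substituting $g=i_1 j$ this is (up to the $j=1_G$ correction) the coefficient count coming from $\overline D\cap i_1 i_2^{-1}\overline D$, which by $\overline D\,\overline D^{(-1)} = (v-2k+\lambda)1_G + (v-2k+\lambda)G$-type identities (the complement is a $(v,v-k,v-2k+\lambda)$-difference set) depends only on whether $i_1=i_2$. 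So RTA2 holds with a constant $\lambda_{rr}$, and I will extract its value and the small boundary correction from deleting the column $j=1_G$. For RTA3, the number of columns containing both surviving symbols $s_1,s_2\in\overline D$ is the number of blocks $Dj$ of $\dev(D)$, $j\neq 1_G$, containing $\{s_1,s_2\}$; since $\{s_1,s_2\}\subseteq D$ is impossible (they are not in $D$), the deleted block does not contain the pair, so this count is exactly $\lambda$ — the full SBIBD pair-count. Assembling: $Y^{[C]}$ is the RL-form of a double array, hence (transposing back) a $k\times(v-k)$ double array exists.

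**Main obstacle.** The routine part is the $\mathbb{Z}G$ bookkeeping; the one genuinely delicate point is the off-by-one coming from the deletion of the distinguished column $j=1_G$ and the fact that $1_G$ may or may not lie in $D$, and more importantly whether $i_1 i_2^{-1}\in D$ or $\overline D$ affects the RTA2 count — I must check that the correction term is the same for every pair $(i_1,i_2)$ of distinct rows, so that $\lambda_{rr}$ is truly constant. This is where the difference-set property (as opposed to an arbitrary Youden square) does the real work: for a general Youden square the analogous counts need not be constant, which is exactly the failure noted in~\cite{WYAgrawal}. I expect the verification to reduce to the identity $D D^{(-1)} = n1_G + \lambda G$ together with $|D|=k$, $k(k-1)=\lambda(v-1)$, and nothing deeper; the parameters of the resulting double array will come out as predicted by Corollary~\ref{TAparakor}.
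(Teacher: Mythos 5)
Your argument is correct, but it is not the route the paper takes for this theorem: the paper's proof is purely by citation, invoking Proposition 2 of~\cite{TAYS} (RTA1 and RTA3 hold for \emph{any} Youden square) and Proposition 5 of~\cite{TAYS} (RTA2 holds when the Youden square is developed from a difference set, for any group and any deleted column), whereas you verify the three counts from scratch. Your self-contained computation is in fact closest in spirit to the paper's own Sect.~\ref{directsect}, where the array $A(G,D)$ with entry $x^{-1}y$ is shown to be a double array by exactly the counts you use: each column of the RL form has $k-\lambda$ occupied cells (your RTA1, giving replication $n$), row-pair counts reduce to translate intersections of $\overline D$, which is a $(v,v-k,v-2k+\lambda)$-difference set (your RTA2, with $\lambda_{rr}=v-2k+\lambda$), and symbol-pair counts come from $\dev(D)$ being a $(v,k,\lambda)$-SBIBD minus a block not containing the pair (your RTA3, with $\lambda_{cc}=\lambda$); none of this needs $G$ abelian, matching the theorem's generality. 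What your version buys is a visible, self-contained argument; what the paper's buys is brevity by outsourcing to known results. Two blemishes, neither fatal: the ``main obstacle'' you flag is actually trivial, since the deleted column (indexed by $1_G$, symbol set $D$) is unoccupied in every row of $Y^{[C]}$, so no boundary correction enters RTA2 and constancy over row pairs is immediate from the definitional count of representations $i_1i_2^{-1}=xy^{-1}$; and your quoted identity $\overline D\,\overline D^{(-1)}=(v-2k+\lambda)1_G+(v-2k+\lambda)G$ has the wrong coefficient on $1_G$ (it should be $(k-\lambda)1_G+(v-2k+\lambda)G$), though only the coefficient on non-identity elements matters for your count. Similarly, the passing claim that a surviving point lies on $k-\lambda$ blocks of the residual is off (it lies on $k$); the statement you actually need, and correctly give right afterwards, is that each surviving column has $k-\lambda$ occupied cells.
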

\begin{proof}
This theorem consists of two results already known put together for constructing the RL-form of a double array. In~\cite{TAYS}, RTA1 and RTA3 was proven in Proposition 2 for any Youden square $Y$, and in Proposition 5 RTA2 was proven in the special case when $Y$ is constructed from a difference set as in Construction~\ref{Yconstr}, without any assumtion of being abelian and to hold regardless of the choice of column $C$ for $Y^{[C]}$ in Construction~\ref{Rconstr}. Here we write $G$ multiplicatively, whereas it was written additively in~\cite{TAYS}. Hence, from $D$ we can construct an RL-form of a double array and thereby a double array with $|D|=k$ rows and $|G|-|D|=v-k$ columns.
\end{proof}

\begin{thm}\label{TAthm}
Let $D$ be a $(v,k,\lambda)$-difference set in an abelian group $G$ which admits $-1$ as a multiplier. Then there is a $k\times(v-k)$ triple array.
\end{thm}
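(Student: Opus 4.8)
The plan is to reduce the claim to a single group-ring identity. By Theorem~\ref{DAthm} the array $Y^{[C]}$ obtained by applying Constructions~\ref{Yconstr} and~\ref{Rconstr} to $D$ already satisfies RTA1--RTA3 for every choice of deleted column $C$, so only adjusted orthogonality, i.e.\ RTA4, needs to be verified. First I would trade the multiplier hypothesis for reversibility: since $-1$ is a multiplier of $D$, the discussion following Theorem~\ref{fixttranslate} shows that $D$ has a reversible translate, and that being a multiplier passes to translates; as Constructions~\ref{Yconstr} and~\ref{Rconstr} applied to a translate $Dg$ merely relabel the rows and permute the columns of the array built from $D$ — operations that preserve all of TA1--TA4 — we may assume from the outset that $D^{(-1)}=D$. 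Since we only need one triple array to exist, I would then fix $C$ to be the column indexed by $1_G$, so that the symbols deleted in Construction~\ref{Rconstr} are exactly the elements of $D$.

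Next I would pin down the relevant constant. The resulting double array has $r=k$ rows and $c=v-k$ columns, and a short count of cells (each surviving column of $Y^{[C]}$ carries $k-\lambda$ symbols) shows that $n=k-\lambda$ is the only value RTA4 could hold with, consistent with $\lambda_{rc}=k-\lambda$ coming from Corollary~\ref{TAparakor} and the identity $k(k-1)=\lambda(v-1)$ of Theorem~\ref{identitetDS}. So the statement to prove is: for every row $i\in D$ and every symbol $s\in G\setminus D$, the symbol $s$ occurs exactly $n$ times among the columns of $Y^{[C]}$ in which row $i$ is occupied.

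For the computation, write $Y(i,j)=ij$. The cell $(i,g)$ of $Y^{[C]}$ is occupied precisely when $ig\notin D$, so the columns occupied in row $i$ form $J_i=\{g\in G:ig\notin D\}=G\setminus i^{-1}D$ (and indeed $1_G\notin J_i$, matching the deletion of column $1_G$). For $s\in G\setminus D$, the symbol $s$ lies in column $g$ of $Y^{[C]}$ iff $s\in Dg$, i.e.\ iff $g\in D^{(-1)}s$; hence the number of occurrences of $s$ among the columns in $J_i$ equals $|D^{(-1)}s\cap(G\setminus i^{-1}D)|=k-|D^{(-1)}s\cap i^{-1}D|$, and $|D^{(-1)}s\cap i^{-1}D|$ is exactly the coefficient of $is$ in the product $D\cdot D$ in $\mathbb{Z}G$. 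Because $D^{(-1)}=D$, Theorem~\ref{DDinvthm} gives $D\cdot D=DD^{(-1)}=n1_G+\lambda G$; and since $i\in D=D^{(-1)}$ while $s\notin D$, we have $is\neq 1_G$, so that coefficient is $\lambda$. Therefore $s$ occurs $k-\lambda=n$ times among the columns in $J_i$, for all $i$ and $s$, which is RTA4; so $Y^{[C]}$ is the RL-form of a triple array, yielding the desired $k\times(v-k)$ triple array.

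I expect the main obstacle to be precisely the bookkeeping in the last step: translating the Youden-square and RL-form data into the assertion that the relevant count is the $\mathbb{Z}G$-coefficient of $is$ in $D\cdot D$, while keeping straight which factor is inverted and translated on which side, and which cells of $Y$ survive into $Y^{[C]}$. Once that identification is made, the role of the hypothesis becomes transparent: reversibility of (a translate of) $D$ is exactly what forces $is\neq 1_G$ and hence fixes the coefficient at $\lambda$ rather than $n$, which is what makes the count independent of $i$ and $s$.
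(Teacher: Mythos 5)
Your proof is correct and follows essentially the same route as the paper: pass to a reversible translate using Theorem~\ref{fixttranslate}, invoke Theorem~\ref{DAthm} for RTA1--RTA3, and verify RTA4 via the group-ring identity $DD^{(-1)}=n1_G+\lambda G$. The only cosmetic difference is that the paper runs the computation as a multiset identity $DgR_x=nG-ngd^{-1}$ valid for every choice of deleted column, while you fix the column indexed by $1_G$ and count each symbol $s$ in row $i$'s active columns via the coefficient of $is$ in $DD$, which suffices for the existence statement.
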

\begin{proof}
Let $Y$ be a Youden square obtained from Construction~\ref{Yconstr} when applied to a difference set such as above. We will prove that applying Construction~\ref{Rconstr} to any column $C$ of $Y$ gives a $Y^{[C]}$ which is the RL-form of a triple array. We already have RTA1, RTA2 and RTA3 by Theorem~\ref{DAthm} and what remains is to prove RTA4. 

That a difference set with multiplier $-1$ has a reversible translate is given by Lemma~\ref{fixttranslate}. So, suppose $Y$ is constructed from a difference set $Dg$ with reversible translate $D$. Take an element $x\in Dg$. We will prove that the columns in $Y$ which have elements from $G\setminus Dg$ in row $x$ together contain exactly $n=k-\lambda$ copies of each element in $G\setminus Dg$.   

We partition $G$ into $L_x$ and $R_x$ such that $L_x$ is the set of column indices for columns having an element of $Dg$ in row $x$, and $R_x$ consists of the remaining column indices. If wanted, the columns of $Y$ could be permuted as in Figure~\ref{Dgshade}. 

\begin{figure}[h]
\begin{tikzpicture}
\draw[fill=gray!20]
(0mm,30mm) rectangle (5mm,0mm)
(5mm,10mm) rectangle (25mm,15mm);

\draw
(0mm,10mm) -- (5mm,10mm)
(0mm,15mm) -- (5mm,15mm)
(5mm,0cm) -- (25mm,0mm)
(25mm,15mm) -- (60mm,15mm)
(25mm,10mm) -- (60mm,10mm)
(25mm,30mm) -- (25mm,0mm) -- (60mm,0mm) --(60mm,30mm)
(5mm,30mm) -- node[midway,above]{$L_x$} (25mm,30mm)
-- node[midway,above]{$R_x$} (60mm,30mm)
(2.5mm,12.5mm) node {$x$};

\end{tikzpicture}
	\caption{A Youden square where the shaded part of row $x$ and the first column both have support $Dg$.}
	\label{Dgshade}
\end{figure}

Note that $Y$ can be regarded as part of a multiplication table for $G$ and that in $\mathbb{Z}G$ the product $DgR_x$ represents a multiset consisting of the elements in the columns with indices in $R_x$, counted with multiplicity. First we note that $x=dg$ for some $d\in D$ and that $xL_x=Dg$, so

\begin{equation*}
L_x=x^{-1}Dg=(dg)^{-1}Dg=g^{-1}d^{-1}Dg=Dd^{-1}.
\end{equation*}

We will now determine the multiset $DgR_x$.
$$
DgR_x=Dg(G-L_x)=DgG-DgL_x=DgG-DgDd^{-1}=DgG-DDgd^{-1}
$$
We note that $DgG=|Dg|G=kG$ and since $D$ is reversible we can make use of Theorem~\ref{DDinvthm} to write 
\begin{multline*}
DgG-DDgd^{-1}=kG-(n1_G+\lambda G)gd^{-1}=kG-ngd^{-1}-\lambda Ggd^{-1}=\\
=kG-ngd^{-1}-\lambda G=(k-\lambda)G-ngd^{-1}=nG-ngd^{-1}
\end{multline*} 
where $gd^{-1}\in Dg$.

Hence, we have proved that every element in the columns indexed by $R_x$ contain each element of $G\setminus Dg$ exactly $n=k-\lambda$ times. Since $x$ is an arbitrary element of $Dg$ this holds for any $x\in Dg$ thereby proving RTA4 for $Y^{[C]}$ where $Dg$ is the support of $C$. And, because $g\in G$ was arbitrary, the proof holds with respect to any column $C$ of $Y$. 

\end{proof}
And now we will prove the converce, that if a Youden square constructed from an abelian difference set $D$ gives a triple array, then $D$ admits $-1$ as a multiplier.

For convenience we let $Y^{[C]}_x$ denote the array consisting of all columns of $Y^{[C]}$ where row $x\in\supp(C)$ has elements in $G\setminus\supp(C)$. 

\begin{lma}\label{lambdalma}
Let $D$ be a non-reversible abelian difference set that via Construction~\ref{Yconstr} gives a Youden square from which  Construction~\ref{Rconstr} gives the RL form of a triple array, then $|D\cap D^{(-1)}|=\lambda$.
\end{lma}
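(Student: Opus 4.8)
The plan is to translate condition RTA4 for $Y^{[C]}$ into equations on the coefficients of the group ring element $DD\in\mathbb{Z}G$, and then to isolate the coefficient of $1_G$, which equals $|D\cap D^{(-1)}|$. Fix the column $C$ of the Youden square $Y$ obtained from $D$ by Construction~\ref{Yconstr} for which $Y^{[C]}$ is the RL form of a triple array, and let $h\in G$ be its index, so $\supp(C)=Dh$. The resulting triple array has $v-1$ symbols, $k$ rows and $v-k$ columns, so the relation $v=r+c-1$ holds with $r=k$ and Lemma~\ref{lambda_rclma} applies. First I would pin down $\lambda_{rc}$. The supports of the columns of $Y$ are exactly the blocks of the SBIBD $\dev(D)$ (Theorem~\ref{devDSBIBDthm}), and since $\supp(C)=Dh$ contains no element of $G\setminus Dh$, any two symbols of $G\setminus Dh$ occur together in exactly $\lambda$ columns of $Y^{[C]}$; hence $\lambda_{cc}=\lambda$, and Lemma~\ref{lambda_rclma} gives $\lambda_{rc}=r-\lambda_{cc}=k-\lambda=n$.

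Next I would carry out the group ring computation, as in the proof of Theorem~\ref{TAthm}. For a row $i\in D$, the columns in which $Y^{[C]}$ has an occupied cell in row $i$ are those indexed by $R_i:=G\setminus i^{-1}Dh$ (note that $h\in i^{-1}Dh$, so the column $C$ itself is correctly excluded), and, regarding $Y$ as part of the multiplication table of $G$, the multiset of symbols occurring in those columns of $Y^{[C]}$ is the restriction to $G\setminus Dh$ of $DR_i=DG-D\cdot i^{-1}Dh=kG-i^{-1}h\,DD$ in $\mathbb{Z}G$. Writing $DD=\sum_{g\in G}m_g g$ with $m_g\in\mathbb{Z}$, condition RTA4 together with $\lambda_{rc}=n$ says that the coefficient of each $s\in G\setminus Dh$ in $DR_i$ equals $n$; comparing coefficients gives $m_{h^{-1}is}=k-n=\lambda$ for every $i\in D$ and every $s\in G\setminus Dh$.

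Finally, to read off $m_{1_G}$ I would specialise to $s=i^{-1}h$, so that $h^{-1}is=1_G$. Now $s=i^{-1}h$ lies in $G\setminus Dh$ precisely when $i\notin D^{(-1)}$, and this is the one place where the hypothesis is used: since $D\neq D^{(-1)}$ while $|D|=|D^{(-1)}|$, the set $D\setminus D^{(-1)}$ is non-empty, so a row $i\in D$ with $i\notin D^{(-1)}$ exists, and for it the identity above yields $m_{1_G}=\lambda$. As $m_{1_G}$ counts the pairs $(d_1,d_2)\in D\times D$ with $d_1d_2=1_G$, that is, the elements $d\in D$ with $d^{-1}\in D$, we obtain $m_{1_G}=|D\cap D^{(-1)}|$, and the lemma follows. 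I expect the main obstacle to be nothing more than careful bookkeeping in $\mathbb{Z}G$: getting $R_i=G\setminus i^{-1}Dh$ and the identity $DR_i=kG-i^{-1}h\,DD$ exactly right, and verifying the equivalence $i^{-1}h\in Dh\iff i\in D^{(-1)}$; beyond that the argument is purely formal.
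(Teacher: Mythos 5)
Your proof is correct and is essentially the paper's argument in group-ring clothing: both evaluate RTA4 at a row $x\in D\setminus D^{(-1)}$ and count occurrences of the symbol $x^{-1}$ in the columns active in that row (your coefficient $m_{1_G}$ of $1_G$ in $DD$ is exactly $|D\cap D^{(-1)}|$, and $k-m_{1_G}$ is the paper's count $|\overline{D}\cap D^{(-1)}|$), then equate this with $\lambda_{rc}=k-\lambda$. The only differences are cosmetic: you allow an arbitrary deleted column with support $Dh$ (the paper's proof implicitly takes $\supp(C)=D$) and you re-derive $\lambda_{rc}=k-\lambda$ by computing $\lambda_{cc}=\lambda$ directly rather than quoting Theorem~\ref{TASBIBDthm}, both of which are harmless.
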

\begin{proof}
Suppose $D$ gives the RL-form of a triple array when applying Construction~\ref{Yconstr} and Construction~\ref{Rconstr}, but that $D$ is not reversible. Then there is an $x\in D$ such that $x^{-1}\in\overline D$ and we claim that $x^{-1}$ occurs $|\overline D\cap D^{(-1)}|$ times in $Y^{[C]}_x$.

Let $y\in D$ such that $y^{-1}\in\overline D$, $x$ and $y$ not necessarily distinct. Any column $z$ where $xz=y^{-1}$ is active in $Y^{[C]}_x$, but then we also have $yz=x^{-1}$. Hence, $x^{-1}$ occurs at least  $|\overline D\cap D^{(-1)}|$ times in $Y^{[C]}_x$. To see that $x^{-1}$ does not occur more times in $Y^{[C]}_x$ we assume that $y^{-1}\in D$ and $yw=x^{-1}$. But then $xw=y^{-1}$ so column $w$ is not active in $Y^{[C]}_x$ which proves the claim.

Now, since RTA4 tells us that every element of $\overline{D}$ occur $\lambda_{rc}$ times in $Y^{[C]}_x$ and Theorem~\ref{TASBIBDthm} together with Lemma~\ref{lambda_rclma} give $\lambda_{rc}=k-\lambda$ we have
$$\lambda_{rc}=k-\lambda=|\overline D\cap D^{(-1)}|=|D|-|D\cap D^{(-1)}|.$$
Hence, $|D\cap D^{(-1)}|=\lambda$. 

\end{proof}

\begin{lma}\label{snittminus1lma}
Let $D$ be a difference set in an abelian group $G$. Then  
$$|Dg\cap(Dg)^{(-1)}|\in\{|D|,\lambda\}\quad\text{ for all }g\in G$$
if and only if $D$ admits $-1$ as a multiplier.
\end{lma}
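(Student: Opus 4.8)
The plan is to reduce everything to reading off coefficients of the group ring element $D^2=D\cdot D\in\mathbb{Z}G$. The first step is a dictionary: for each $g\in G$ the assignment $x\mapsto(xg^{-1},x^{-1}g^{-1})$ is a bijection from $Dg\cap(Dg)^{(-1)}$ onto $\{(d_1,d_2)\in D\times D : d_1d_2=g^{-2}\}$ (commutativity of $G$ is used both to check $x^{-1}g^{-1}\in D$ and to compute the product), so $|Dg\cap(Dg)^{(-1)}|$ equals the coefficient of $g^{-2}$ in $D^2$. Since $x\mapsto x^{-1}$ is a bijection of $G$, as $g$ ranges over $G$ the element $g^{-2}$ ranges over exactly the subgroup of squares $H=\{h^2:h\in G\}$; thus the condition in the statement says precisely that every coefficient of $D^2$ at an element of $H$ lies in $\{|D|,\lambda\}=\{k,\lambda\}$.

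For the implication ``$-1$ is a multiplier $\Rightarrow$ condition'', I would invoke the remark after Theorem~\ref{fixttranslate} to get a reversible translate $Dh$, so $(Dh)^{(-1)}=Dh$ gives $D^{(-1)}=Dh^2$ in $\mathbb{Z}G$. Multiplying the identity $DD^{(-1)}=n1_G+\lambda G$ of Theorem~\ref{DDinvthm} by $h^{-2}$ (and using $Gh^{-2}=G$) yields $D^2=nh^{-2}+\lambda G$, so every coefficient of $D^2$ equals $\lambda$ except the coefficient at $h^{-2}$, which equals $n+\lambda=k$. By the dictionary above, $|Dg\cap(Dg)^{(-1)}|\in\{k,\lambda\}$ for every $g$, as required.

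For the converse, suppose the condition holds but, aiming at a contradiction, that no translate of $D$ is reversible. Since $Dg$ and $(Dg)^{(-1)}$ are both $k$-subsets of $G$, non-reversibility of $Dg$ forces $|Dg\cap(Dg)^{(-1)}|<k$, hence it equals $\lambda$ by hypothesis; so every coefficient of $D^2$ at an element of $H$ is $\lambda$. Now push $D$ down to the quotient $G/H$, which is an elementary abelian $2$-group, via the ring homomorphism $\pi\colon\mathbb{Z}G\to\mathbb{Z}[G/H]$. Comparing the coefficient of the identity coset in $\pi(D^2)=\pi(D)^2$ gives, on one hand, $\sum_{t\in H}(\text{coeff of }t\text{ in }D^2)=|H|\lambda$, and on the other hand $\sum_{xH\in G/H}|D\cap xH|^2$ (here one uses that each element of $G/H$ is its own inverse). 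Since $\sum_{xH}|D\cap xH|=k$, Cauchy--Schwarz gives $|H|\lambda\geq k^2/|G/H|$, i.e.\ $v\lambda\geq k^2$. But Theorem~\ref{identitetDS} rearranges to $v\lambda=k^2-n<k^2$, a contradiction. Hence some translate of $D$ is reversible and, again by the remark following Theorem~\ref{fixttranslate}, $-1$ is a multiplier of $D$.

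The odd-order situation in the converse is in fact immediate, since then $H=G$, so every coefficient of $D^2$ lies in $\{k,\lambda\}$ and a short count using $k(k-1)=\lambda(v-1)$ forces $D^2=nt_0+\lambda G$ with $t_0$ a square; I would probably still present the uniform argument above rather than split cases. The one genuine obstacle is the even-order case, where $H$ is a proper subgroup and the hypothesis only controls $D^2$ on $H$: the right move is to descend to $G/H$, where the exponent-$2$ structure collapses the ``identity-coefficient'' computation into a sum of squares that Cauchy--Schwarz pins down against $k(k-1)=\lambda(v-1)$.
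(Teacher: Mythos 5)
Your proof is correct. The reduction you start from --- identifying $|Dg\cap(Dg)^{(-1)}|$ with the coefficient of $g^{-2}$ in $DD$ --- is exactly the paper's, and your forward direction (a reversible translate $Dh$ gives $D^{(-1)}=Dh^{2}$, hence $DD=nh^{-2}+\lambda G$ by Theorem~\ref{DDinvthm}) is the paper's argument made explicit in $\mathbb{Z}G$. Where you genuinely diverge is the converse. The paper derives its contradiction from a single global count: if each $g^{-2}$ had exactly $\lambda$ representations $d_id_j$, summing over $g\in G$ gives $\lambda v$, which is compared with $|DD|=k^{2}>\lambda v$ to conclude that some $g^{-2}$ admits more than $\lambda$ representations. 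As you point out, that inference is immediate only when squaring is surjective (odd order); when $|G|$ is even the squares form a proper subgroup $H$, the hypothesis controls $DD$ only on $H$, and the excess of $k^{2}$ over $\lambda v$ could a priori sit on the $k^{2}-\lambda|H|$ products lying outside $H$. Your localization --- mapping to $\mathbb{Z}[G/H]$, reading the identity-coset coefficient of the image of $DD$ as $\sum_{xH}|D\cap xH|^{2}$ (using that $G/H$ has exponent $2$), and bounding it below by $k^{2}/[G:H]$ via Cauchy--Schwarz --- pins the count to $H$ itself and yields $\lambda v\geq k^{2}$, contradicting $\lambda v=k^{2}-n$ from Theorem~\ref{identitetDS}. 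So your route costs a little more machinery but buys a complete treatment of the even-order case, which is in fact the only case that can occur for reversible abelian difference sets (Theorem~\ref{veventhm}) and is precisely the step the paper's count, as written, elides; an equivalent finish uses real characters, since $\sum_{s\in H}[DD]_{s}=\frac{|H|}{v}\bigl(k^{2}+([G:H]-1)n\bigr)>\lambda|H|$, and your quotient argument is the elementary form of that computation.
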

\begin{proof}
Suppose $D$ admits $-1$ as a multiplier. Then $-1$ is also a multipler of every translate, so for each $g\in G$ we have $|Dg\cap(Dg)^{(-1)}|=\lambda$ or $|Dg\cap(Dg)^{(-1)}|=|D|$ since we know by Lemma~\ref{fixttranslate} that there is a least one translate $Dg$ fixed by the multiplier $-1$. 

Now, suppose $|Dg\cap(Dg)^{(-1)}|\in\{|D|,\lambda\}$ for all $g\in G$ and that $-1$ is not a multiplier of $D$. So there is no $g\in G$ such that $|Dg\cap(Dg)^{(-1)}|=|D|$ which means that $|Dg\cap(Dg)^{(-1)}|=\lambda$ for all $g\in G$. Hence, $d_ig=g^{-1}d_j^{-1}$ or equivalentely $d_id_j=g^{-2}$ has exactly $\lambda$ solutions of pairs $d_i,d_j\in D$ for each $g\in G$.   

This would be $\lambda|G|$ elements $g^{-2}$ in total when counted with multiplicity. We compare this with all elements $d_id_j$ counted with multiplicity which is given by $|DD|$ in $\mathbb{Z}G$. For  $(v,k,\lambda)$-difference sets we have the identity $\lambda(v-1)=k(k-1)$ given in Theorem~\ref{identitetDS} and can write
$$|DD|=k^2>|\lambda G|=\lambda v=k^2-(k-\lambda)=k^2-n.$$
So, there is a $g^{-2}\in G$ for which there are more than $\lambda$ solutions of $d_id_j=g^{-2}$. And since $|D|$ is the only avialable number besides $\lambda$ we know that there is a reversible translate which means that $-1$ is a multiplier. 
\end{proof}

\begin{thm}
Let $D$ be an abelian difference set that via Construction~\ref{Yconstr} gives a Youden square from which Construction~\ref{Rconstr} gives the RL form of a triple array, then $D$ admits $-1$ as a multiplier.
\end{thm}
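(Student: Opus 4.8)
The plan is to combine the two preceding lemmas. Lemma~\ref{lambdalma} tells us that if an abelian difference set $D$ gives a triple array through Constructions~\ref{Yconstr} and~\ref{Rconstr} but is \emph{not} reversible, then $|D\cap D^{(-1)}|=\lambda$. Lemma~\ref{snittminus1lma} characterizes admitting $-1$ as a multiplier by the condition that $|Dg\cap(Dg)^{(-1)}|\in\{|D|,\lambda\}$ for every $g\in G$. So the strategy is: assume $D$ gives a triple array, and show that the hypothesis of Lemma~\ref{snittminus1lma} holds, i.e.\ that for every translate $Dg$ the intersection size $|Dg\cap(Dg)^{(-1)}|$ is either $k$ or $\lambda$.

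The key observation making this work is that the hypothesis of the theorem is invariant under replacing $D$ by a translate $Dg$: by Construction~\ref{Yconstr} the Youden square built from $Dg$ is just a column-relabelling (and row-relabelling) of the one built from $D$, and Construction~\ref{Rconstr} is applied ``to any column'', so if $D$ gives the RL-form of a triple array then so does every translate $Dg$. (Indeed, this point is already implicit in the proof of Theorem~\ref{TAthm}, where the column $C$ is arbitrary.) Therefore I would argue as follows: fix $g\in G$. If $Dg$ is reversible, then $|Dg\cap(Dg)^{(-1)}|=|Dg|=k\in\{|D|,\lambda\}$ and we are done for this $g$. If $Dg$ is not reversible, then since $Dg$ also satisfies the hypotheses of the theorem, Lemma~\ref{lambdalma} applied to $Dg$ in place of $D$ gives $|Dg\cap(Dg)^{(-1)}|=\lambda$. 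Either way the intersection size lies in $\{|D|,\lambda\}$, and since $g$ was arbitrary the hypothesis of Lemma~\ref{snittminus1lma} is verified. That lemma then yields immediately that $D$ admits $-1$ as a multiplier.

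The one point requiring a little care — and the step I expect to be the main (minor) obstacle — is the claim that ``$D$ gives a triple array'' is genuinely preserved under translation, in the precise sense needed to invoke Lemma~\ref{lambdalma} for $Dg$. Concretely: the Youden square $Y_g$ produced by Construction~\ref{Yconstr} from $Dg$ has entries $Y_g(i,j)=ij$ with rows indexed by $Dg$; one checks that the map sending column $j$ of the original $Y$ (built from $D$) to column $gj$, together with the row relabelling $d\mapsto dg$, is an isomorphism of arrays, so deleting any column and its symbols from $Y_g$ produces the same family of arrays (up to relabelling) as doing so in $Y$. Since $D$ was assumed to give the RL-form of a triple array for the relevant column, every $Dg$ does too. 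Once this is spelled out in a sentence, the rest is a two-line citation of the two lemmas, so the proof is short. Alternatively, and perhaps more cleanly, one can avoid the translation argument entirely by noting that Lemma~\ref{lambdalma}'s proof never really used that $C$ was a fixed column: running it for the column with support $Dg$ shows directly that whenever $Dg$ is non-reversible we get $|Dg\cap (Dg)^{(-1)}|=\lambda$, and then Lemma~\ref{snittminus1lma} finishes it. I would present whichever of these is shorter.
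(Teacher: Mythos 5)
Your proposal is correct and follows essentially the same route as the paper: combine Lemma~\ref{lambdalma} (applied to each non-reversible translate) with Lemma~\ref{snittminus1lma} to force the existence of a reversible translate, hence a multiplier $-1$. The only difference is that you spell out explicitly the translation-invariance of the hypothesis (that $Y$ built from $Dg$ is a row/column relabelling of $Y$ built from $D$), a step the paper's proof leaves implicit when it moves from $|D\cap D^{(-1)}|=\lambda$ to $|Dg\cap(Dg)^{(-1)}|=\lambda$ for all $g$.
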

\begin{proof}
By Lemma~\ref{lambdalma} we know that a non-reversible abelian difference set $D$ which gives the RL-form of a triple array through Construction~\ref{Yconstr} and Construction~\ref{Rconstr} must satisfy $|D\cap D^{(-1)}|=\lambda$. But  Lemma~\ref{snittminus1lma} tells us that it is not possible that $|(Dg)\cap (Dg)^{(-1)}|=\lambda$ for all $g\in G$. Hence, there exists some $g\in G$ such that $Dg$ is reversible wich proves that $-1$ is a multiplier of $D$. 
\end{proof}

\end{section}
%---------------------------------------------------
%============== existence ===============
\begin{section}{Existence}\label{existencesect}
%---------------------------------------------------
Here we consider abelian difference sets having multiplier $-1$, or we can say being reversible because of Lemma~\ref{fixttranslate}. An overview of the theory can be found in~\cite{BJL}, here we only cover enough for answering questions relevant for triple arrays. We give all small such difference sets as examples and prove the existence of an infinite family of triple arrays.

\begin{thm}[cf.~\cite{BJL}]\label{veventhm}
Let $D$ be a reversible $(v,k,\lambda)$-difference set in an abelian group, then $v$ and $\lambda$ are even and $n$ is a square. Moreover, $v$ and $n$ have the same odd prime divisors.
\end{thm}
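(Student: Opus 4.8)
The plan is to work in the integral group ring $\mathbb{Z}G$ and exploit the fact that a reversible difference set satisfies $D=D^{(-1)}$, so that the basic identity from Theorem~\ref{DDinvthm} becomes $D^2 = n1_G+\lambda G$. I would first extract the statement that $\lambda$ is even: since $D$ is reversible, every non-identity element $g$ and its inverse $g^{-1}$ are ``hit'' the same number of times as a difference, and more to the point, the $\lambda$ pairs $(x,y)$ with $xy^{-1}=g$ pair up with the $\lambda$ pairs giving $g^{-1}$; the subtlety is the elements of order $2$. So the cleaner route is: count solutions of $xy^{-1}=g$ with $g=g^{-1}$, i.e.\ involutions (together with $g=1_G$), versus solutions for $g\neq g^{-1}$. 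Actually the quickest argument for $\lambda$ even: apply the substitution $x\mapsto x^{-1}=x$ (valid since $D=D^{(-1)}$) to a representation $xy^{-1}=g$, obtaining $y^{-1}x = g$, i.e.\ $(yx^{-1})^{-1}=g$, so $yx^{-1}=g^{-1}$; hence representations of $g$ are in bijection with representations of $g^{-1}$, which only tells us $\lambda$ is symmetric. To force parity one looks at a fixed involution or uses the group-ring coefficient of $1_G$ in $D^2$: the number of pairs $(x,y)\in D\times D$ with $xy=1_G$ equals $n+\lambda$ (coefficient of $1_G$ in $n1_G+\lambda G$), and since $D=D^{(-1)}$ this counts $x\in D$ with $x^{-1}\in D$, which is all of $D$, so $k=n+\lambda$ — true but content-free. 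The real parity input must come from counting $x\in D$ with $x=x^{-1}$.

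Concretely, let $t$ be the number of involutions (elements of order $2$) of $G$ lying in $D$, plus the indicator of whether $1_G\in D$. Pairs $(x,y)\in D\times D$ with $xy^{-1}$ equal to a \emph{fixed} element split according to whether $x=y$ or not, and the count of ordered pairs with $x\ne y$ is even. Summing the ``difference hits'' appropriately and using $D=D^{(-1)}$, one finds $v-1\equiv$ (number of involutions in $G$) modulo $2$ forces things; I expect the cleanest statement is that $\lambda$ counts each unordered antipodal pair twice except where fixed, giving $\lambda \equiv (\text{number of solutions of } x^2=g \text{ in } D) \pmod 2$, and choosing $g$ to be an involution \emph{not} represented as a square yields $\lambda$ even. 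Granting $\lambda$ even, $v$ even follows immediately from Theorem~\ref{identitetDS}: $k(k-1)=\lambda(v-1)$, and since $\gcd(k,k-1)=1$ exactly one of $k,k-1$ is even; working modulo the $2$-adic valuation, $\lambda$ even with $k(k-1)$ having $2$-adic valuation at least that of $\lambda$ plus that of $v-1$, combined with $n=k-\lambda$ — actually the slick deduction: $-1$ is a multiplier, and multipliers fix a translate, so $-1\in\mathbb{Z}_v^\times$ must satisfy certain congruences; the standard fact (Theorem~\ref{veventhm} is quoted from~\cite{BJL}) is that $n$ a square and $v,\lambda$ even go together. I would cite the multiplier theorem machinery: if $p$ is an odd prime dividing $v$ but not $n$, then $p$ (or a power) is a multiplier, but $-1$ is the only multiplier available in a suitable sense, forcing a contradiction unless $p\mid n$; this gives ``$v$ and $n$ have the same odd prime divisors.''

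For the claim that $n$ is a square, the route is: in $\mathbb{Z}G$ (or after passing to the group algebra over $\mathbb{Q}$ or $\mathbb{C}$ via characters), the equation $DD^{(-1)}=n1_G+\lambda G$ becomes, for every nontrivial character $\chi$ of the abelian group $G$, $|\chi(D)|^2 = n$, i.e.\ $\chi(D)\overline{\chi(D)}=n$. Reversibility $D=D^{(-1)}$ gives $\chi(D)=\chi(D^{(-1)})=\overline{\chi(D)}$, so $\chi(D)$ is a \emph{real} algebraic integer with $\chi(D)^2=n$; hence $\chi(D)=\pm\sqrt n$, and since $\chi(D)$ is an algebraic integer, $\sqrt n$ is an algebraic integer, hence $n$ is a perfect square (a rational integer whose square root is an algebraic integer is a square). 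The ``same odd prime divisors'' part then comes from a counting/character argument: summing $\chi(D)$ over characters, or using that $D$ reversible forces structure on the $2$-part; more standardly, if an odd prime $p$ divides $v$ but not $n$, one derives a contradiction from the fact that $\sqrt n$ must lie in $\mathbb{Q}(\zeta_p)$-type considerations, or simply invoke the quoted reference.

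The main obstacle I anticipate is the parity argument for $\lambda$ even — getting the involution bookkeeping exactly right — and pinning down the ``same odd prime divisors'' statement, which genuinely needs either the multiplier theorem or a careful character-sum argument rather than pure ring manipulation; the $n$-square part is the easy one via characters and reversibility. Since Theorem~\ref{veventhm} is attributed to~\cite{BJL}, the honest write-up is to sketch the character proof of ``$n$ square'' and ``$\lambda$, $v$ even,'' and defer ``same odd prime divisors'' to the reference or to Turyn's exponent bound / the multiplier theorem.
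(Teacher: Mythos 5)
You are sketching a proof of a statement the paper itself does not prove: Theorem~\ref{veventhm} is quoted with the citation ``cf.~\cite{BJL}'' (the result goes back to Johnsen), so there is no in-paper argument to compare against; judged on its own terms, your sketch has two genuine gaps. First, the decisive step of your ``$n$ is a square'' argument is wrong as stated: from $\chi(D)$ real, an algebraic integer, and $\chi(D)^2=n$ you conclude that $n$ is a perfect square because ``a rational integer whose square root is an algebraic integer is a square''. That principle is false: $\sqrt{2}$ is an algebraic integer, yet $2$ is not a square. What the character argument actually needs is a character taking only the values $\pm1$, i.e.\ a character of order $2$; for such a $\chi$ the sum $\chi(D)$ is a rational integer and $\chi(D)^2=n$ really does force $n$ to be a square. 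But a character of order $2$ exists precisely because $|G|=v$ is even, so this step must come \emph{after} the parity of $v$ is established, not independently of it.

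Second, ``$v$ even follows immediately from $k(k-1)=\lambda(v-1)$ and $\lambda$ even'' is false: $(v,k,\lambda)=(7,4,2)$ satisfies the identity with $\lambda$ even and $v$ odd, so no purely arithmetic deduction from the parameter identity can work; the group structure is needed. The clean route, which also repairs your $\lambda$-parity bookkeeping, is this: since $D=D^{(-1)}$ and $G$ is abelian, Theorem~\ref{DDinvthm} gives $DD=n1_G+\lambda G$, and for $g\ne 1_G$ the coefficient $\lambda$ counts ordered pairs $(x,y)\in D\times D$ with $xy=g$; the pairs with $x\ne y$ cancel modulo $2$, so $\lambda\equiv|\{x\in D:x^2=g\}|\pmod 2$ for every $g\ne 1_G$. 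The image of $D$ under squaring has at most $k<v-1$ elements (nontriviality), so some non-identity $g$ lies outside it, whence $\lambda$ is even --- no involutions are needed, and your ``choose $g$ an involution not represented as a square'' is both unnecessary and unjustified (why should such an involution exist?). If now $v$ were odd, squaring would be a bijection of $G$, so each count $|\{x\in D:x^2=g\}|$ is $0$ or $1$, hence $0$ for all $g\ne 1_G$ by the parity just proved, forcing $D\subseteq\{1_G\}$, a contradiction; thus $v$ is even, and then the order-$2$ character gives $n$ square. Finally, you simply defer ``$v$ and $n$ have the same odd prime divisors''; that is defensible given the paper also only cites \cite{BJL}, but your gesture toward the multiplier theorem (``$-1$ is the only multiplier available, forcing a contradiction'') is not an argument as it stands.
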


We note that triple arrays constructed here never coincide with Paley triple arrays. To see this, we compare the sum of the number of rows and columns. For a Paley triple array the sum is $2q+1$, so it is odd. For triple arrays constructed from $(v,k,\lambda)$-difference sets with multiplier $-1$ the sum is $k+(v-k)=v$, which is even by Theorem~\ref{veventhm}. The following theorem excludes many well-known difference sets.

\begin{thm}[cf.~\cite{Baumert}]\label{nixcyclicthm}
Minus one is never a multiplier of a non-trivial cyclic difference set.
\end{thm}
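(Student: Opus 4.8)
The plan is to prove Theorem~\ref{nixcyclicthm} by combining the structural constraint from Theorem~\ref{veventhm} with the classical multiplier theory for cyclic difference sets. Suppose, for contradiction, that $D$ is a non-trivial cyclic $(v,k,\lambda)$-difference set in $G=\mathbb{Z}_v$ admitting $-1$ as a multiplier. By Theorem~\ref{veventhm} applied in the cyclic (hence abelian) case, $v$ is even, $\lambda$ is even, $n=k-\lambda$ is a perfect square, and $v$ and $n$ have exactly the same odd prime divisors. The first thing I would extract is that $v$ is even, so $2\mid v$, and hence we may consider the order of the multiplier $t=-1$ modulo $v$: since $v>2$ (the set is non-trivial, so $n>1$ forces $v$ reasonably large), $-1\not\equiv 1\pmod v$, so $-1$ has order $2$ in $(\mathbb{Z}/v\mathbb{Z})^\times$.

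The key step is to invoke the theorem of Ho/McFarland--Rice (or the classical fact, see~\cite{BJL} or~\cite{Baumert}) that if $t$ is a numerical multiplier of an abelian difference set of order $v$, then there is a translate of $D$ fixed by $\phi_t$ (this is Theorem~\ref{fixttranslate}, already in the excerpt), and moreover the orbits of $\phi_t$ on $G$ control the structure: a fixed translate $D'$ is a union of orbits of the map $x\mapsto -x$ on $\mathbb{Z}_v$. The orbits of $x\mapsto -x$ have size $2$ except for the fixed points $x=0$ and, since $v$ is even, $x=v/2$. Thus $|D'|=k$ is congruent to $0$, $1$, or $2$ modulo $2$ depending on how many of $\{0,v/2\}$ lie in $D'$; more usefully, counting the contribution of differences, one gets a parity/counting restriction. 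The cleanest route is: a reversible translate $D'$ satisfies $D'=-D'$, so in $\mathbb{Z}_v$ the element $v/2$ (its own negative) is either in $D'$ or not, and every other element is paired with its distinct negative. Then compute $|D'\cap(-D')|=|D'|=k$ and feed this into the difference-set identity to derive a contradiction with $n$ being a nontrivial square and $v,n$ sharing odd prime divisors — concretely, one shows $2\mid v$ together with reversibility forces a congruence on $k$ modulo $4$ or on $n$ that is incompatible with $k(k-1)=\lambda(v-1)$ and $n=k-\lambda$ a square $>1$.

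Actually the sharpest and most standard argument, which I would adopt, is the following number-theoretic one: by Theorem~\ref{veventhm}, $v$ and $n$ have the same set of odd prime divisors, and $v$ is even; write $v=2^a m$ and $n=2^b m'$ with $m,m'$ odd having the same prime divisors. Since $n$ is a square, $b$ is even. Combine with the Hall--Ryser / Bruck--Ryser--Chowla type restriction (the ``first multiplier theorem'' consequences, cf.~\cite{Baumert}) on a cyclic difference set admitting $-1$: one knows $2$ must be a multiplier as well in many cases, or one derives that $-1$ being a multiplier forces $v\mid$ something impossible. The truly classical statement (Hall, and see Baumert's book~\cite{Baumert}) is exactly that $-1$ cannot be a multiplier of a nontrivial cyclic difference set, proved by showing the hypothetical reversible $D'\subseteq\mathbb{Z}_v$ would make $\dev(D')$ a symmetric design with a polarity having a specific number of absolute points, contradicting the count of absolute points of a polarity of a symmetric $2$-design (which must equal $k$ or be congruent to $k\bmod 2$ in a way that clashes with $n$ a nonsquare-free obstruction). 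I would therefore reduce to: (i) a reversible cyclic $D'$ gives a polarity $\sigma$ of the SBIBD $\dev(D')$; (ii) the number of absolute points of $\sigma$ equals $|D'\cap\{x:2x=0\}|$-related count, which for even $v$ is at most $2$; (iii) but a standard eigenvalue argument shows a polarity of a nontrivial symmetric $(v,k,\lambda)$-design has at least $k-\sqrt{k-\lambda}+1$ absolute points — since $n=k-\lambda\geq 4$ (it is a square $>1$), this lower bound exceeds $2$, a contradiction.

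The main obstacle I anticipate is step (iii): getting the sharp lower bound on the number of absolute points of a polarity, which requires the trace argument on the incidence matrix $A$ (with $A=A^{\top}$ after applying the polarity), using that the eigenvalues of $A$ are $k$ (once) and $\pm\sqrt{n}$ with known multiplicities, so that $\operatorname{tr}(A)$, which is exactly the number of absolute points, is congruent to $k$ modulo $\sqrt n$ and is nonnegative; since for a \emph{cyclic} design with the polarity coming from $x\mapsto -x$ the trace counts solutions of $x+x\in D'$, i.e.\ at most two points, we need $k\equiv(\text{at most }2)\pmod{\sqrt n}$ with $\sqrt n\geq 2$, and then push to a contradiction using $k(k-1)=\lambda(v-1)$ and $v$ even. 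Alternatively, if citing~\cite{Baumert} for the polarity count is deemed acceptable, the proof collapses to assembling Theorem~\ref{veventhm} with that citation, and the only real content is verifying the numerology $n\geq 4$ so that the absolute-point bound bites. I would write it the short way, citing~\cite{Baumert}, since the theorem is stated ``cf.~\cite{Baumert}'' in the excerpt.
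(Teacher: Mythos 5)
The paper does not prove this statement at all: it is quoted from Baumert's monograph (hence the ``cf.~\cite{Baumert}''), so your fallback option --- state it with that citation --- is exactly what the paper does, and your use of Theorem~\ref{fixttranslate} and Theorem~\ref{veventhm} to pass to a reversible translate with $v$ even, $\lambda$ even and $n$ a square is legitimate as far as it goes.

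However, the self-contained argument you sketch has a genuine gap at its core, namely step (ii)--(iii). For a reversible translate $D'$ in $\mathbb{Z}_v$ the natural polarity of $\dev(D')$ is $x\mapsto D'-x$ (equivalently, the symmetric incidence matrix $A$ with $A_{x,y}=1$ iff $x+y\in D'$), and its absolute points are the $x$ with $2x\in D'$. When $v$ is even this number equals $2\,|D'\cap 2\mathbb{Z}_v|$, which is typically of the order of $k$; it is emphatically not ``at most $2$''. You have conflated the absolute points of the polarity with the fixed points of the group automorphism $x\mapsto -x$ (which are just $0$ and $v/2$), and the trace of $A$ counts the former, not the latter. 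Moreover the lower bound you invoke (``a polarity of a nontrivial symmetric design has at least $k-\sqrt{k-\lambda}+1$ absolute points'') is asserted rather than proved, and what the eigenvalue/trace argument actually yields is only a congruence, $\operatorname{tr}(A)\equiv k \pmod{\sqrt n}$, which does not conflict with the correct count $2|D'\cap 2\mathbb{Z}_v|$. So the intended contradiction never materializes; note also that the easy parity argument disposes only of odd $v$, while Theorem~\ref{veventhm} tells you $v$ is even --- which is precisely the hard case your sketch does not engage. As a blind proof the proposal therefore does not close; as a citation it coincides with the paper's treatment.
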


Thus we turn our attention to non-cyclic groups. Kibler~\cite{Kibler} gave a list of the small non-cyclic $(v,k,\lambda)$-difference sets for $k<20$ up to equivalence. This list consists of 74 difference sets in 37 groups. But the only groups here satisfying the necessary conditions of being both abelian and of even order are of order 16 or 36.

\paragraph{\bf{The case} $(v,k,\lambda)=(16,6,2)$} Here there are 12 groups having a total of 27 difference sets. Four of these groups are abelian, having a total of eight difference sets. But only four difference sets admit $-1$ as multiplier. These groups and sets were pointed out in~\cite{Johnsen} and are given here below with numbering as in~\cite{Kibler}.

\begin{enumerate}
	\item [(B)] Abelian. $\langle a,b:a^4=b^4=1\rangle$
		\begin{enumerate}
			\item [3.] $\{1,a,a^2,b,ab^2,a^2b^3\}$
			\item [5.] $\{1,a,b,a^2b,ab^2,a^2b^2\}$
		\end{enumerate}
	\item [(C)] Abelian. $\langle a,b,c:a^4=b^2=c^2=1\rangle$
		\begin{enumerate}
			\item [7.] $\{1,a,a^2,ab,ac,a^3bc\}$
		\end{enumerate}
	\item [(D)] Abelian. $\langle a,b,c,d:a^2=b^2=c^2=d^2=1\rangle$
		\begin{enumerate}
			\item [8.] $\{1,a,b,c,d,abcd\}$
		\end{enumerate}
\end{enumerate}

\paragraph{\bf{The case} $(v,k,\lambda)=(36,15,6)$} Here there are nine groups having a total of 34 difference sets. The two abelian groups contain seven difference sets, but only the following admits $-1$ as multiplier. 
\begin{enumerate}
	\item [(A II)] Abelian. $\langle a,b,c,d:a^3=b^3=c^2=d^2=1\rangle$
		\begin{enumerate}
			\item [17.] $\{1,a,a^2,c,a^2c,bc,a^2bc,b^2c,a^2b^2c,ad,a^2bd,b^2d,acd,bcd,a^2b^2cd\}$
		\end{enumerate}
\end{enumerate}
Applying Constructions~\ref{Yconstr} and~\ref{Rconstr} to the difference sets above give $6\times 10$ and $15\times 21$ triple arrays by Theorem~\ref{TAthm}. We note that these difference sets are all members of the following well-known and important family.

\begin{df}
A $(4u^2,2u^2\pm u, u^2\pm u)$-difference sets is called a \emph{Hadamard difference set}.
\end{df}

To our knowledge, almost all reversible difference sets are Hadamard. This question was systematically studied by~\cite{McFMa1990} and~\cite{Ma1991}, and the only exception known is due to McFarland~\cite{McFarland73} who proved there is an abelian reversible $(4000,775,150)$-difference set and proposed the following conjecture.
\begin{conj}[McFarland's Conjecture]
If $D$ is a reversible difference set, then either $D$ is a $(4000,775,150)$-difference set or $D$ is a Hadamard difference set.
\end{conj}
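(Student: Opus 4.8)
The plan is to run the problem through the character theory of finite abelian groups, the standard machinery behind every classification result on reversible difference sets, and to see precisely why the argument only goes through conditionally. Let $D$ be a reversible $(v,k,\lambda)$-difference set in an abelian group $G$; by Lemma~\ref{fixttranslate} we may pass to a translate with $D=D^{(-1)}$ and $1_G\in D$, so Theorem~\ref{DDinvthm} gives $D^2=n1_G+\lambda G$ in $\mathbb{Z}G$. Applying a non-principal character $\chi$ of $G$, for which $\chi(G)=0$, yields $\chi(D)^2=n$, and since $n$ is a perfect square by Theorem~\ref{veventhm} we get $\chi(D)\in\{\sqrt n,-\sqrt n\}$ for every non-principal $\chi$, while $\chi_0(D)=k$. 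So the dual group splits as $\widehat G=\{\chi_0\}\cup P\cup N$ with $P=\{\chi:\chi(D)=\sqrt n\}$ and $N=\{\chi:\chi(D)=-\sqrt n\}$, and Fourier inversion applied to the $0$--$1$ coefficients of $D$ forces $|P|-|N|=(v-k)/\sqrt n$, hence (with $|P|+|N|=v-1$) determines $|P|$ and $|N|$, while the transform of $\mathbf 1_P-\mathbf 1_N$ returns $D$ up to scaling; so $P$ and $N$ are themselves highly structured.

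The next step is to convert $\chi(D)\in\mathbb{Z}$ into structure of $G$. If $H=\ker\chi$ has prime index $p$ and $a_0,\dots,a_{p-1}$ count the elements of $D$ in the $p$ cosets of $H$, then $\chi(D)=\sum a_i\zeta_p^i$ lying in $\mathbb{Z}$ forces $a_1=\dots=a_{p-1}$; thus $D$ meets every coset of every index-$p$ subgroup equally, except one coset where it is off by $\pm\sqrt n$. Iterating this over a chain of subgroups of $G$ (a composition series of $\widehat G$) forces $D$, up to translation and complementation, to be assembled from such a chain, and feeding the result back into the identity $k(k-1)=\lambda(v-1)$ of Theorem~\ref{identitetDS} together with the computed $|P|,|N|$ pins the parameters to the Hadamard relations $(v,k,\lambda)=(4u^2,2u^2\pm u,u^2\pm u)$. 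This is essentially the McFarland--Ma programme, and it closes cleanly provided the primes dividing $v$ --- which by Theorem~\ref{veventhm} are $2$ and the odd prime divisors of $n$ --- satisfy a self-conjugacy condition modulo $\exp(G)$.

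The main obstacle, and the reason this is a conjecture and not a theorem, is that self-conjugacy can fail: when $v$ involves an odd prime $p$ whose multiplicative order modulo $\exp(G)$ is even, the ``exceptional cosets'' for different subgroups can interact in ways the rationality argument does not control, and a genuinely exotic object appears --- McFarland's $(4000,775,150)$ difference set, where the primes $2$ and $5$ conspire. Finishing the proof would require replacing self-conjugacy by finer arithmetic: Weil-type bounds on the Gauss and Jacobi sums that the values $\chi(D)$ become in the bad cases, Turyn-style $\mathbb{Z}$-module arguments on the span of the character values, or Schmidt's field-descent method, and then showing that across the infinitely many residual parameter families the only non-Hadamard survivor is the $(4000,775,150)$ set. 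No uniform argument of that kind is known; concretely the programme is (i) the formal character split above, (ii) the rationality/coset reduction to Hadamard parameters under self-conjugacy, and (iii) a field-descent analysis of the non-self-conjugate cases --- and step (iii) is exactly where every current approach stalls.
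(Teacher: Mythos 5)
The statement you were asked to prove is not a theorem of the paper: it is stated there as \emph{McFarland's Conjecture}, attributed to McFarland and left open, so there is no proof in the paper to compare yours against. Your submission is, by its own admission, not a proof either: you describe the character-theoretic programme (pass to a reversible translate, apply Theorem~\ref{DDinvthm}, use that $n$ is a square by Theorem~\ref{veventhm} to get $\chi(D)=\pm\sqrt n$ for non-principal $\chi$, then a coset/rationality analysis), and you state explicitly that step (iii), the non-self-conjugate cases, ``is exactly where every current approach stalls.'' A proposal that ends by conceding the decisive case cannot be accepted as a proof; at best it is a fair summary of why the known partial results (McFarland--Ma, Ma) do not settle the conjecture.

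Beyond that structural point, a few details would need repair even as a conditional argument. First, you cannot in general arrange both $D=D^{(-1)}$ and $1_G\in D$: if $D$ is reversible, then in an abelian group $(Dg)^{(-1)}=Dg^{-1}$, so a translate $Dg$ is reversible only when $Dg=Dg^{-1}$, which fails for most $g$; your formula $|P|-|N|=(v-k)/\sqrt n$ depends on the normalisation $1_G\in D$ and must be restated for a general reversible $D$. Second, the conjecture as printed in the paper is not restricted to abelian groups, while your entire framework (characters, and the appeal to Theorem~\ref{veventhm} for $n$ being a square) is abelian-only, so at minimum you are proving a weaker statement. Third, the claim that the rational-character-value and coset-intersection analysis ``pins the parameters to the Hadamard relations'' is not a routine deduction from Theorem~\ref{identitetDS}; it is itself the content of deep theorems of McFarland and Ma proved under self-conjugacy-type hypotheses, so part (ii) of your programme cites rather than proves the key step. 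None of this changes the bottom line: the statement is an open conjecture, and your text, as you acknowledge, does not close it.
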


For Hadamard difference sets there is a strong non-existence theorem.

\begin{thm}\cite{McFarland1990b}\label{bara2och3}
If there exists a Hadamard difference set with multiplier minus one in an abelian group of order $4u^2$, then the square-free part of $u$ can only have the prime factors 2 and 3.
\end{thm}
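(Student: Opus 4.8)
The plan is to reduce the statement to a constraint on the character values of the difference set and then to rule out large primes by a $p$-local descent; this is a substantial non-existence result, so I will describe the architecture and the tools rather than every computation. \textbf{Step 1 (character values).} A Hadamard difference set has parameters $(v,k,\lambda)=(4u^2,2u^2-u,u^2-u)$ after possibly replacing $D$ by its complement (which is again a difference set and still has $-1$ as a multiplier), and order $n=k-\lambda=u^2$. Since $-1$ is a multiplier, I would invoke Theorem~\ref{fixttranslate} to assume $D=D^{(-1)}$ in $\mathbb{Z}G$, apply an arbitrary nonprincipal character $\chi$ of the abelian group $G$ to the identity $DD^{(-1)}=n1_G+\lambda G$ of Theorem~\ref{DDinvthm}, and use $\chi(G)=0$ to get $\chi(D)\,\overline{\chi(D)}=u^2$. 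Reversibility forces $\chi(D)=\chi(D^{(-1)})=\overline{\chi(D)}$, so $\chi(D)$ is a rational integer whose square is $u^2$, whence
\[
\chi(D)=\pm u\qquad\text{for every nonprincipal character }\chi .
\]
Equivalently, $E:=G-2D$ has $\pm1$ coefficients and $|\chi(E)|=2u$ for all $\chi$ (including $\chi_0(E)=v-2k=2u$), i.e. $EE^{(-1)}=v\,1_G$, so the matrix $(E_{gh^{-1}})_{g,h}$ is a $G$-invariant Hadamard matrix of order $4u^2$. The target becomes: no prime $p\geq5$ divides $u$.

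\textbf{Step 2 ($p$-local rigidity).} Theorem~\ref{veventhm} already tells us $v$ is even and that the odd prime divisors of $v=4u^2$ and of $n=u^2$ coincide (they are the odd primes of $u$), which is consistent with Step~1 but not yet enough. Assuming for contradiction that a prime $p\geq5$ divides $u$, I would analyse the projections of $D$ onto the $p$-power quotients of $G$ (which exist since $p^2\mid v$). Projecting to $G/N\cong\mathbb{Z}_p$ and writing $\bar D=\sum_i c_i x^i$ with $c_i=|D\cap Nx^i|$: each nonprincipal character of $\mathbb{Z}_p$ lifts to a character of $G$ of order $p$, so takes the value $\pm u\equiv0\pmod p$ on $\bar D$, and comparing coefficients in the $\mathbb{Z}$-basis $1,\zeta_p,\dots,\zeta_p^{p-2}$ forces $c_1=\dots=c_{p-1}$ and $c_0-c_1=\pm u$; that is, the restriction of $D$ to any quotient of order $p$ is constant on the nontrivial cosets. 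Iterating this on quotients of order $p^2,p^3,\dots$, and exploiting that $\chi(D)=\pm u$ is rational — so $p$ behaves as a self-conjugate prime for every relevant modulus, which controls exactly how $(1-\zeta_{p^j})$ divides $\chi(D)$ — pins down the restrictions of $D$ to the Sylow-$p$-related sections of $G$ ever more tightly.

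\textbf{Step 3 (descent, and the obstacle).} The contradiction should then come from feeding this $p$-local rigidity back into the global constraints. Using standard structural lemmas for integral group rings — of the type: if a prime over $p$ divides $\chi(A)$ for every $\chi$ nontrivial on a subgroup $P\leq G$ of order $p$, then $A=PA_1+pA_2$ with $A_1,A_2$ having nonnegative coefficients — one splits off the subgroup of order $p$, obtains a difference-set-like equation in $G/P$ attached to the parameter $u/p$, and repeats. Tracking multiplicities, signs, and whether the Sylow $p$-subgroup is cyclic or elementary abelian, the descent is engineered to terminate in either non-integral parameters or negative coefficients, which is impossible; this is in essence the argument of~\cite{McFarland1990b}, and the same circle of ideas (Turyn's self-conjugacy and exponent bounds, Ma's lemma, and their refinements) underlies the general necessary conditions for abelian difference sets. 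I expect this bookkeeping, rather than the reductions in Steps~1--2, to be the hard part. Finally, the primes $2$ and $3$ survive because the corresponding $p$-local configurations are consistent: the reversible $(16,6,2)$ and $(36,15,6)$ Hadamard difference sets exhibited above realise $u=2$ and $u=3$, so the restriction on the square-free part of $u$ is sharp as to which primes can occur.
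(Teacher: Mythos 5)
Note first that the paper does not prove this statement at all: it is quoted from McFarland \cite{McFarland1990b}, so the only meaningful comparison is with that source. Your Step 1 is correct and standard: after replacing $D$ by a translate fixed by the multiplier $-1$ (Theorem \ref{fixttranslate}), Theorem \ref{DDinvthm} and reversibility give $\chi(D)\in\{\pm u\}$ for every nonprincipal character, equivalently $E=G-2D$ has $\pm1$ coefficients with $EE^{(-1)}=4u^2 1_G$. The coset-distribution consequence in Step 2 is also fine (the sign is the same for all nonprincipal characters of a fixed order-$p$ quotient, by Galois invariance of the rational value). But the argument stops being a proof at Step 3: the entire content of the theorem --- why the descent must end in non-integrality or negativity, and why it does so precisely for $p\geq 5$ --- is replaced by the remark that ``this is in essence the argument of \cite{McFarland1990b}'', i.e.\ by an appeal to the very result to be proved.

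The gap is concrete: nothing in your Steps 1--2, nor in the Turyn/Ma self-conjugacy machinery you invoke, uses the hypothesis $p\geq 5$. As written, the same descent would apply verbatim to $p=3$, yet reversible Hadamard difference sets with $3\mid u$ exist (the $(36,15,6)$ example in Section 4, and $u=6$ via Theorem \ref{allknownrevHada}), so the sketched mechanism cannot by itself yield the contradiction; some additional quantitative input must enter. In McFarland's proof that input is the analysis of \emph{subdifference sets}: the intersections of the reversible set with cosets of suitable subgroups are shown to be difference sets in their own right, with parameters determined by $u$ and $p$, and the resulting integrality and nonnegativity constraints on those parameters are what exclude every prime $p\geq 5$ while leaving $p=2,3$ possible. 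Your proposal never constructs these subdifference sets nor the inequality that kills $p\geq 5$; observing that $u=2,3$ are realized only shows that a correct proof must use $p\geq 5$ somewhere, it does not supply that step. So the proposal is a reasonable reduction to the known character-theoretic normal form, but the heart of the theorem is missing.
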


There are several constructions of reversible difference sets. Here follows two almost trivial composition theorems, both due to Menon~\cite{Menon62}. But as they both preserve reversibility they guarantee that we can construct an infinite number of triple arrays. 

\begin{thm}\cite{Menon62}\label{Menonproduct}

If $G_1$ and $G_2$ each contain a Hadamard difference set, then so does $G_1\times G_2$. 
\end{thm}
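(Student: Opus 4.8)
The plan is to use the integral group ring characterization of Hadamard difference sets from Theorem~\ref{DDinvthm}, together with the fact that the group ring of a direct product factors naturally. Suppose $D_1$ is a Hadamard difference set in $G_1$ of order $4u_1^2$ and $D_2$ is a Hadamard difference set in $G_2$ of order $4u_2^2$; write $k_i = 2u_i^2 \pm u_i$, $\lambda_i = u_i^2 \pm u_i$, $n_i = k_i - \lambda_i = u_i^2$, so $n_i = u_i^2$ is the order of $D_i$. The key algebraic input is that $\mathbb{Z}(G_1\times G_2)$ contains copies of $\mathbb{Z}G_1$ and $\mathbb{Z}G_2$ (viewing $g_1\mapsto(g_1,1_{G_2})$ etc.), these commute, and for subsets we have the identification of $D_1\times D_2$ (as a subset of $G_1\times G_2$) with the product $D_1 D_2$ of the corresponding group ring elements. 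One also needs $G_i = \sum_{g\in G_i} g$ to satisfy $G_1 G_2 = G_1\times G_2$ in $\mathbb{Z}(G_1\times G_2)$, and $D_i G_i = k_i G_i$.

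First I would set $D := D_1 D_2 \in \mathbb{Z}(G_1\times G_2)$, corresponding to the subset $D_1\times D_2$ of size $k_1 k_2$ in a group of order $v_1 v_2 = 4u_1^2\cdot 4u_2^2 = 4(2u_1u_2)^2$. Then I would compute $D D^{(-1)}$: since inversion acts coordinatewise on $G_1\times G_2$, we have $D^{(-1)} = D_1^{(-1)} D_2^{(-1)}$, and using commutativity of the two subrings, $DD^{(-1)} = (D_1 D_1^{(-1)})(D_2 D_2^{(-1)})$. Applying Theorem~\ref{DDinvthm} to each factor gives $D_i D_i^{(-1)} = n_i 1_{G_i} + \lambda_i G_i = u_i^2 1_{G_i} + \lambda_i G_i$. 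Multiplying out the two expressions and collecting terms (using $1_{G_1} G_2 = G_2$, $G_1 G_2 = G_1\times G_2$, $G_1 1_{G_2} = G_1$, and $G_i^2 = |G_i| G_i = v_i G_i$), one obtains $DD^{(-1)} = u_1^2 u_2^2\, 1_{G_1\times G_2} + \Lambda (G_1\times G_2)$ for the appropriate $\Lambda$; the coefficient of the identity is exactly $n_1 n_2 = (u_1 u_2)^2$, so the order of $D$ is $(u_1 u_2)^2$, and the parameter count confirms $\Lambda = k_1 k_2 - (u_1 u_2)^2$, which matches the Hadamard form $(4u^2, 2u^2\pm u, u^2\pm u)$ with $u = 2u_1 u_2$ after checking the arithmetic $k_1 k_2 - (u_1u_2)^2 = 2(2u_1u_2)^2 \pm 2u_1u_2$. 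By Theorem~\ref{DDinvthm}, $D_1\times D_2$ is then a difference set in $G_1\times G_2$ with exactly these parameters, hence Hadamard.

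The main obstacle I anticipate is purely bookkeeping: verifying that the arithmetic identities linking $(k_1,\lambda_1,n_1)$ and $(k_2,\lambda_2,n_2)$ to the parameters $(4u^2, 2u^2\pm u, u^2\pm u)$ with $u = 2u_1 u_2$ actually close up, particularly tracking the $\pm$ signs (the product of two ``$+$'' sets, or a ``$+$'' with a ``$-$'', etc.), and checking $DD^{(-1)}$ has the stated form rather than merely confirming $D$ is \emph{a} difference set. In fact the cleanest route avoids the sign casework entirely: show directly that $DD^{(-1)} = n 1_{G} + \lambda G$ with $n = n_1 n_2$ a perfect square and then invoke Theorem~\ref{identitetDS} to recover $\lambda = (k_1k_2)(k_1k_2-1)/(v_1v_2-1)$ forced by $v$, $k$; combined with $v = 4n_1n_2\cdot(\text{the right factor})$ one reads off the Hadamard parameters. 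I would also remark that since inversion is being applied to each factor, if $D_1$ and $D_2$ both admit $-1$ as a multiplier (i.e.\ are reversible up to translation), so does $D_1\times D_2$ — this is what makes the theorem useful for iterating and producing an infinite family of triple arrays via Theorem~\ref{TAthm}, and it follows from $(D_1 g_1)^{(-1)} = D_1^{(-1)} g_1^{-1}$ applied coordinatewise.
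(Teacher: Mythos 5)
Your choice of candidate set is where the argument breaks: $D_1\times D_2$ is essentially never a difference set in $G_1\times G_2$, so the computation you sketch cannot close up. Your factorization $DD^{(-1)}=(D_1D_1^{(-1)})(D_2D_2^{(-1)})$ is fine, but expanding $(n_11_{G_1}+\lambda_1G_1)(n_21_{G_2}+\lambda_2G_2)$ gives
$$n_1n_2\,1_G+n_1\lambda_2\,(\{1_{G_1}\}\times G_2)+n_2\lambda_1\,(G_1\times\{1_{G_2}\})+\lambda_1\lambda_2\,G,$$
and the two cross terms are supported on the proper subgroups $\{1_{G_1}\}\times G_2$ and $G_1\times\{1_{G_2}\}$, not on all of $G$. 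Since $n_1,n_2>0$, elements such as $(1_{G_1},g_2)$ with $g_2\neq 1_{G_2}$ occur $\lambda_1\lambda_2+n_1\lambda_2$ times while generic elements occur $\lambda_1\lambda_2$ times, so $DD^{(-1)}$ is not of the form $n1_G+\lambda G$ and Theorem~\ref{DDinvthm} does not apply. A parameter count flags the same problem before any group-ring work: $|D_1\times D_2|=k_1k_2\approx v_1v_2/4$, whereas a Hadamard difference set in a group of order $4(2u_1u_2)^2=v_1v_2$ must have roughly half the group, namely $2(2u_1u_2)^2\pm 2u_1u_2$; likewise your claimed order $n_1n_2=(u_1u_2)^2$ is a quarter of the required order $(2u_1u_2)^2$.

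The construction that actually works (and which the paper records right after Proposition~\ref{Menoncomplproduct}) is Menon's ``symmetric difference'' set
$$D=(D_1\times(G_2\setminus D_2))\cup((G_1\setminus D_1)\times D_2),$$
which has the correct size $2(2u_1u_2)^2-2u_1u_2$. If you want to keep a group-ring proof, the clean route is via the $\pm1$ elements $E_i=G_i-2D_i$: for a Hadamard difference set one has $E_iE_i^{(-1)}=4u_i^2\,1_{G_i}$, hence $E:=E_1E_2$ satisfies $EE^{(-1)}=(4u_1u_2)^2\,1_G$, and $E=G-2D$ exactly for the set $D$ above, which then has the Hadamard parameters with $u=2u_1u_2$. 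Your closing remark about reversibility being inherited is a reasonable observation, but it is moot until the underlying set is repaired.
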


\begin{prop}\cite{Menon62}\label{Menoncomplproduct}
The existence of Hadamard difference sets for $u=u_1$ and $u=u_2$ implies that of a Hadamard difference set for $u=2u_1u_2$.
\end{prop}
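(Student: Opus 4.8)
The strategy is to translate ``Hadamard difference set'' into a purely group-ring statement and then do the composition by a tensor (Kronecker) product. The key dictionary is the following. If $D$ is a subset of a group $G$ of order $4u^2$, put $E=G-2D\in\mathbb{Z}G$, the element with coefficient $-1$ on $D$ and $+1$ elsewhere. Using $G=G^{(-1)}$, $DG=GD^{(-1)}=|D|G$, and the characterization $DD^{(-1)}=n1_G+\lambda G$ of Theorem~\ref{DDinvthm}, a short computation gives $EE^{(-1)}=(4u^2-4n)G+4u^2\cdot 1_G$. A $(4u^2,2u^2\pm u,u^2\pm u)$-difference set has $n=k-\lambda=u^2$ for \emph{either} sign, so in that case $EE^{(-1)}=4u^2\cdot 1_G$ and all coefficients of $E$ lie in $\{\pm 1\}$. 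Conversely, any $E\in\mathbb{Z}G$ with coefficients in $\{\pm 1\}$ and $EE^{(-1)}=4u^2\cdot 1_G$ yields $D=\tfrac{1}{2}(G-E)$; applying the augmentation $A\mapsto|A|$ to the identity forces $|E|^2=4u^2$, hence $|E|=\pm 2u$, and reversing the computation shows $D$ is a Hadamard difference set ``for $u$''. So I would first record this equivalence as a short preliminary step.

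\textbf{The composition.} Suppose $E_i\in\mathbb{Z}G_i$ has coefficients in $\{\pm 1\}$ and $E_iE_i^{(-1)}=4u_i^2\cdot 1_{G_i}$ for $i=1,2$, where $|G_i|=4u_i^2$. Let $H=G_1\times G_2$, so $|H|=16u_1^2u_2^2=4(2u_1u_2)^2$, and use the natural ring isomorphism $\mathbb{Z}H\cong\mathbb{Z}G_1\otimes\mathbb{Z}G_2$ sending $(g,h)$ to $g\otimes h$. Put $E=E_1\otimes E_2$. Its coefficients are products of $\pm 1$'s, hence again in $\{\pm 1\}$; and since multiplication in the tensor product is componentwise while the map $A\mapsto A^{(-1)}$ corresponds to applying $(-1)$ in each factor, one gets $EE^{(-1)}=(E_1E_1^{(-1)})\otimes(E_2E_2^{(-1)})=(4u_1^2\cdot 1_{G_1})\otimes(4u_2^2\cdot 1_{G_2})=4(2u_1u_2)^2\cdot 1_H$. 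By the dictionary, $D=\tfrac{1}{2}(H-E)$ is a Hadamard difference set in $H$ with $u=2u_1u_2$, which is the assertion. As a bonus, $D_i=D_i^{(-1)}$ is equivalent to $E_i=E_i^{(-1)}$, and then $E=E^{(-1)}$ too, so the construction preserves reversibility — matching the remark in the surrounding text.

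\textbf{Main obstacle.} There is no deep point here; the whole content is careful bookkeeping. The places to be attentive are: (i) carrying both Hadamard parameter sign-conventions at once, which is exactly why it is cleaner to work with $E=G-2D$ than with $D$ itself; (ii) the arithmetic $|G_1\times G_2|=4(2u_1u_2)^2$, which is where the factor $2$ in the conclusion originates, since $\sqrt{4u_1^2\cdot 4u_2^2}=2\cdot 2u_1u_2$; and (iii) checking that both the ring multiplication and the $(-1)$-antiautomorphism of $\mathbb{Z}(G_1\times G_2)$ factor through the isomorphism with $\mathbb{Z}G_1\otimes\mathbb{Z}G_2$, after which the identity $EE^{(-1)}=4u^2\cdot 1_H$ is immediate. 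Once the group-ring reformulation is in place the proof is essentially a one-line tensor-product argument.
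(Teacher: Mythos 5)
Your proof is correct and is in substance the same construction the paper cites from Menon: the set $D=\tfrac{1}{2}\bigl(H-E_1\otimes E_2\bigr)$, i.e.\ the locus where $E_1\otimes E_2$ has coefficient $-1$, is exactly $(D_1\times(G_2\setminus D_2))\cup((G_1\setminus D_1)\times D_2)$, which is precisely the set the paper writes down (the paper states the construction without verifying it, so your group-ring check is a welcome supplement). One small slip: your general identity should read $EE^{(-1)}=(4u^2-4n)G+4n\cdot 1_G$ (coefficient $4n$, not $4u^2$, on $1_G$); this is harmless since you immediately specialize to $n=u^2$, where the two agree.
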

The proof is built on the following construction. Let $D_1$ and $D_2$ be the given difference sets in the groups $G_1$ and $G_2$ respectively. Then the desired difference set is obtained as $D=(D_1\times(G_2\setminus D_2))\cup((G_1\setminus D_1)\times D_2)\quad\text{of }G:=G_1\times G_2$.\\

All know constructions were put together in~\cite{JungSchmidt1997} to give the following result.
\begin{thm}[cf.~\cite{BJL}]\label{allknownrevHada}
Let $G$ be a group of the form $G=\mathbb{Z}^b_4\times\mathbb{Z}^2_{2^{c_1}}\times\ldots\times\mathbb{Z}^2_{2^{c_r}}$, and let $H$ be a group of the form $H=\mathbb{Z}^2_2\times\mathbb{Z}^{2a}_3\times\mathbb{Z}^4_{p_1}\times\ldots\times\mathbb{Z}^4_{p_s}$, where the $p_j$ are (not necessarily distinct) odd primes and where $a,b,c_1,\ldots,c_r$ are non-negative integers. Then both $G$ and $G\times H$ contain reversible Hadamard difference sets. 
\end{thm}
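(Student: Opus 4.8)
The plan is to treat Theorem~\ref{allknownrevHada} as a synthesis: first exhibit reversible Hadamard difference sets in a short list of ``atomic'' groups, and then propagate them through the composition theorems of Section~\ref{existencesect}, which (as noted there) preserve reversibility.

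First I would handle the $2$-group $G=\mathbb{Z}_4^b\times\mathbb{Z}_{2^{c_1}}^2\times\cdots\times\mathbb{Z}_{2^{c_r}}^2$. The singleton $\{1_G\}$ is a (trivially) reversible $(4,1,0)$ Hadamard difference set in $\mathbb{Z}_4$, so iterating Theorem~\ref{Menonproduct} yields a reversible Hadamard difference set in $\mathbb{Z}_4^b$ for every $b$. For each homocyclic factor $\mathbb{Z}_{2^{c_i}}^2$ with $c_i\ge 3$ --- which, one checks, is not a direct product of smaller Hadamard groups and hence is not reachable by the product theorems alone --- I would cite the classical Turyn-type construction of a reversible Hadamard difference set in $\mathbb{Z}_{2^c}\times\mathbb{Z}_{2^c}$; the cases $c=1,2$ already appear among the examples listed above. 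Feeding these atoms into Theorem~\ref{Menonproduct} one factor at a time then gives a reversible Hadamard difference set in all of $G$.

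Next I would build the odd part needed for $G\times H$. The additional atoms are the reversible Hadamard difference set in $\mathbb{Z}_2^2\times\mathbb{Z}_3^{2a}$ for every $a$ (Menon's construction; the case $a=1$ is the $(36,15,6)$ example above) and the reversible Hadamard difference sets in $\mathbb{Z}_2^2\times\mathbb{Z}_p^4$ for every odd prime $p$ (Xia, completed by Wilson and Xiang). Each of these is a genuine Hadamard group, so Theorem~\ref{Menonproduct} and Proposition~\ref{Menoncomplproduct} can be invoked to amalgamate them with one another and with $G$. The subtle point is that $H$ carries only a \emph{single} copy of $\mathbb{Z}_2^2$, so one cannot simply pair a fresh $\mathbb{Z}_2^2$ with each odd factor $\mathbb{Z}_{p_j}^4$; the surplus factors $\mathbb{Z}_{p_j}^4$ must instead be absorbed through the more careful composition of Jung and Schmidt, which folds in an odd-order factor of square order without the spurious factor of $2$ that Proposition~\ref{Menoncomplproduct} would introduce. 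Realising $H$ this way and then applying Theorem~\ref{Menonproduct} once more to glue $H$ to $G$ completes the construction.

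The hard part --- and the reason this is genuinely the theorem of~\cite{JungSchmidt1997} rather than a one-line corollary of the composition theorems --- is precisely this bookkeeping: both composition theorems \emph{multiply and double} the parameter $u$, so one must track $2$-adic valuations carefully to confirm that the groups produced have exactly the stated invariant factors, neither too large nor too small, and, at every amalgamation, that the inversion map still fixes a translate of the composed set so that reversibility is inherited all the way up. Granting the classical base-case constructions, each such verification is routine, but there are a great many of them.
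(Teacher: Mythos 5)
First, a point of comparison: the paper does not prove this statement at all. It is stated with ``cf.~\cite{BJL}'' and the surrounding text says explicitly that all known constructions were assembled in~\cite{JungSchmidt1997} to give this result; the theorem is imported, not derived. So there is no internal argument to measure your sketch against, and a citation-level justification would in fact match the paper's own practice. Judged as a proof, however, your proposal has a genuine gap, and it sits exactly where you flag the ``subtle point''. You correctly observe that $H$ contains only one copy of $\mathbb{Z}_2^2$, so that Theorem~\ref{Menonproduct} and Proposition~\ref{Menoncomplproduct} cannot assemble $H$ (or $G\times H$) from the atoms $\mathbb{Z}_2^2\times\mathbb{Z}_3^{2a}$ and $\mathbb{Z}_2^2\times\mathbb{Z}_{p_j}^4$ once $a+s\geq 2$: every application of the direct-product theorem to two odd-part atoms duplicates the $2$-part, and Proposition~\ref{Menoncomplproduct} likewise doubles $u$. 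But at precisely this point you ``absorb the surplus factors through the more careful composition of Jung and Schmidt'' --- that is, you invoke the compilation whose content is the theorem being proved. The missing ingredient is the actual composition machinery (Turyn's composition and the Davis--Jedwab covering extended building sets), which combines the odd local pieces inside a fixed $2$-part without spawning new $\mathbb{Z}_2^2$ factors, together with the verification that the building blocks can be chosen symmetric so that inversion fixes a translate of the composed set. None of that appears in the proposal, so reversibility of the final set is asserted rather than established at the one step where it is not routine.

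Two smaller issues of the same kind: the reversibility of your base cases is itself nontrivial. The published constructions for $\mathbb{Z}_{2^c}\times\mathbb{Z}_{2^c}$ and for $\mathbb{Z}_2^2\times\mathbb{Z}_p^4$ (Xia; Wilson--Xiang; Chen) produce Hadamard difference sets, and the fact that translates fixed by inversion can be arranged is an additional property that has to be checked (this is part of what~\cite{McFMa1990} and~\cite{JungSchmidt1997} do); your sketch treats it as automatic. The attribution of the $\mathbb{Z}_2^2\times\mathbb{Z}_3^{2a}$ family to Menon is also off (it is Turyn's construction; Menon's theorems are only the two product results quoted in the paper). On the other hand, your final paragraph somewhat overstates the bookkeeping: Theorem~\ref{Menonproduct} produces its set in exactly $G_1\times G_2$, so no tracking of invariant factors is needed there --- the genuine difficulty is not $2$-adic bookkeeping but the single-$\mathbb{Z}_2^2$ composition and the preservation of reversibility through it. The skeleton (reversible atoms plus reversibility-preserving composition) is the right shape, but as written the argument is circular at its crux; either supply the building-set composition or, like the paper, state the theorem as a result of~\cite{JungSchmidt1997} and cite it.
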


In~\cite{BJL} Theorems~\ref{bara2och3} and~\ref{allknownrevHada} are combined to give the following. 
\begin{kor}[cf.~\cite{BJL}]\label{div6kor}
Let $u$ be a positive integer. There exists a reversible Hadamard difference set in some abelian group of order $4u^2$ if and only if the square-free part of $u$ divides $6$.
\end{kor}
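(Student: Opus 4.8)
The plan is to prove the two directions separately; each reduces almost immediately to one of the two theorems just quoted, the only genuine work being an elementary identification of which integers arise as $4u^{2}$ for the groups appearing in Theorem~\ref{allknownrevHada}.

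For the forward implication, I would start from a reversible Hadamard difference set $D$ in an abelian group $A$ of order $4u^{2}$. Since $\phi_{-1}(D)=D^{(-1)}=D=1_G D\,1_G$, the inversion map is a numerical multiplier of $D$, so $D$ is a Hadamard difference set with multiplier $-1$ in an abelian group of order $4u^{2}$. Theorem~\ref{bara2och3} then forces the square-free part of $u$ to have no prime divisor outside $\{2,3\}$, i.e.\ to divide $6$, which is exactly the claim.

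For the converse, assume the square-free part of $u$ divides $6$. The first step is the elementary observation that this is equivalent to a factorisation $u=2^{\alpha}3^{\beta}n^{2}$ with $\alpha,\beta\ge 0$ and $\gcd(n,6)=1$: the square-free part of $u$ divides $6$ precisely when every prime $p\ge 5$ occurs in $u$ to an even power. Writing $n=q_{1}\cdots q_{s}$ as a product of (not necessarily distinct) primes $\ge 5$, I would then set $G=\mathbb{Z}_{4}^{\alpha}$ (the trivial group if $\alpha=0$) and $H=\mathbb{Z}_{2}^{2}\times\mathbb{Z}_{3}^{2\beta}\times\mathbb{Z}_{q_{1}}^{4}\times\cdots\times\mathbb{Z}_{q_{s}}^{4}$. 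A direct count gives $|G\times H|=4^{\alpha}\cdot 4\cdot 3^{2\beta}\cdot\prod_{j}q_{j}^{4}=4\,(2^{\alpha}3^{\beta}n^{2})^{2}=4u^{2}$, and $G\times H$ has precisely the form covered by Theorem~\ref{allknownrevHada}. Hence $G\times H$ is an abelian group of order $4u^{2}$ containing a reversible Hadamard difference set; since it lies in a group of order $4u^{2}$, that difference set automatically has parameters $(4u^{2},2u^{2}\pm u,u^{2}\pm u)$.

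I do not expect a genuine obstacle here: the forward direction is Theorem~\ref{bara2och3} together with the remark that ``reversible'' already makes $-1$ a multiplier, and the converse is the construction of Theorem~\ref{allknownrevHada} applied to the right group. The only point requiring minor care is matching exponents of primes so that the parametrisation $u=2^{\alpha}3^{\beta}n^{2}$ is used correctly in both directions — i.e.\ confirming that the orders $|G\times H|$ realised in Theorem~\ref{allknownrevHada} are exactly the $4u^{2}$ with square-free part of $u$ dividing $6$, and none others — but this is a routine check on prime exponents (and the small cases such as $u=1$, where $G$ is trivial and $H=\mathbb{Z}_2^2$, cause no trouble).
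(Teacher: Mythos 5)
Your proposal is correct and follows exactly the route the paper indicates: the ``only if'' direction is Theorem~\ref{bara2och3} (reversibility makes $-1$ a multiplier), and the ``if'' direction instantiates Theorem~\ref{allknownrevHada} with $G=\mathbb{Z}_4^{\alpha}$ and $H=\mathbb{Z}_2^2\times\mathbb{Z}_3^{2\beta}\times\mathbb{Z}_{q_1}^4\times\cdots\times\mathbb{Z}_{q_s}^4$ for $u=2^{\alpha}3^{\beta}n^2$ with $\gcd(n,6)=1$. The paper itself only cites this combination from~\cite{BJL} without spelling it out, and your order count $|G\times H|=4u^2$ and prime-exponent bookkeeping supply precisely the missing routine details.
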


We conclude this section by giving an existence result for triple arrays based on the knowledge of abelian reversible difference sets.
\begin{thm}\label{existencethm}
Let $u$ be a positive integer such that the square-free part of $u$ divides $6$, then there is a $TA(4u^2-1,u^2,u^2+u,u^2-u,u^2:(2u^2-u)\times(2u^2+u))$. Moreover, there is a $TA(3999,625,2600,150,625:775\times 3225)$.
\end{thm}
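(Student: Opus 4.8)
The plan is to combine the machinery developed in this section with the two composition results of Menon. First I would observe that for the family $TA(4u^2-1,u^2,u^2+u,u^2-u,u^2:(2u^2-u)\times(2u^2+u))$, the hypothesis that the square-free part of $u$ divides $6$ is, by Corollary~\ref{div6kor}, precisely equivalent to the existence of a reversible Hadamard difference set $D$ in some abelian group $G$ of order $4u^2$. Such a $D$ is a $(4u^2,2u^2-u,u^2-u)$-difference set (choosing the minus sign in the Hadamard parameters; the complement gives the plus sign), so by Theorem~\ref{TAthm} Constructions~\ref{Yconstr} and~\ref{Rconstr} produce a $k\times(v-k)$ triple array with $v=4u^2$ and $k=2u^2-u$, that is, a $(2u^2-u)\times(2u^2+u)$ triple array on $v=4u^2-1$ symbols. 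The remaining parameters are then forced: by Corollary~\ref{TAparakor} every triple array with $v=r+c-1$ is a $TA(v,k,c-k,r-k,k:r\times c)$, and substituting $r=2u^2-u$, $c=2u^2+u$, $k=u^2$ gives $\lambda_{rr}=c-k=u^2+u$, $\lambda_{cc}=r-k=u^2-u$, $\lambda_{rc}=k=u^2$, which is exactly the claimed parameter string. One should also check that $r+c-1 = 4u^2-1 = v$ holds, and that $r-\lambda_{cc}=u^2-u-? $ — actually here $r - \lambda_{cc} = (2u^2-u)-(u^2-u) = u^2 > 2$ for $u\geq 2$, and the case $u=1$ gives the trivial $(4,1,0)$ situation which should be noted or excluded.

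For the sporadic array $TA(3999,625,2600,150,625:775\times 3225)$, the approach is the same but the starting object is McFarland's abelian reversible $(4000,775,150)$-difference set. This is not Hadamard, so Corollary~\ref{div6kor} does not apply, but Theorem~\ref{TAthm} only needs an abelian difference set with $-1$ as a multiplier, which this is. Applying Constructions~\ref{Yconstr} and~\ref{Rconstr} yields a $k\times(v-k) = 775\times 3225$ triple array on $v-1 = 3999$ symbols, and again Corollary~\ref{TAparakor} fixes the remaining parameters: with $r=775$, $c=3225$, $k=625$ one gets $\lambda_{rr}=c-k=2600$, $\lambda_{cc}=r-k=150$, $\lambda_{rc}=k=625$, matching the statement; one verifies $r+c-1 = 3999 = v$. (Here $k=v-\lambda$ in the RL-form sense reads $k = 775$? — care is needed with which parameter of the difference set plays the role of the triple array's $k$: the triple array has $k$ rows equal to the replication count, and the $k$ in the $TA$ notation equals the number of occurrences of each symbol, which by Corollary~\ref{TAparakor} is the smaller value $625 = v - k_{\text{DS}}$ with $k_{\text{DS}} = 775$; I would make this bookkeeping explicit.)

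The one genuine subtlety — and the step I would treat most carefully rather than the main obstacle, since no deep idea is needed — is precisely this translation of parameters between the difference set and the resulting triple array. Theorem~\ref{TAthm} produces an array with $k$ rows and $v-k$ columns where $(v,k,\lambda)$ are the difference-set parameters; but the symbol set of the triple array has size $v-1$ (one Youden-square column and its $k$ symbols are deleted), and the "equireplication number" appearing in the $TA(\cdot)$ notation is a different quantity. To get the stated parameters one reads off $r = k_{\text{DS}}$, $c = v_{\text{DS}} - k_{\text{DS}}$, number of symbols $= v_{\text{DS}} - 1 = r + c - 1$, and then invokes Corollary~\ref{TAparakor} to recover all three $\lambda$'s and the replication number mechanically. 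For the Hadamard family one must also decide whether to start from $D$ or its complement $\overline D$ (also reversible, by the complement theorem) so that $k_{\text{DS}} = 2u^2 - u$ rather than $2u^2+u$; starting from the complement gives $c = 4u^2 - (2u^2-u) = 2u^2+u$ as required. I would write the proof as: (i) invoke Corollary~\ref{div6kor} (resp.\ cite McFarland) to get the reversible abelian difference set; (ii) apply Theorem~\ref{TAthm}; (iii) apply Corollary~\ref{TAparakor} to read off the parameters; and (iv) do the short arithmetic check that the parameter strings coincide, remarking that $u\geq 2$ keeps us away from the Fano-type degeneracy.
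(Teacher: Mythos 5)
Your proposal follows essentially the same route as the paper's proof: obtain the reversible abelian difference sets from Corollary~\ref{div6kor} and McFarland's $(4000,775,150)$ set, apply Theorem~\ref{TAthm}, and compute the parameters via Corollary~\ref{TAparakor} (the paper also invokes Theorem~\ref{TASBIBDthm} here). One small correction to your parenthetical bookkeeping: the replication number of the triple array is $k_{\mathrm{TA}}=rc/v=k_{\mathrm{DS}}-\lambda_{\mathrm{DS}}$ (the order $n$ of the difference set), not $v-k_{\mathrm{DS}}$, though the values you actually use ($u^2$ and $625$) are correct.
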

\begin{proof}
By Corollary~\ref{div6kor} we know that there is an abelian reversible $(4u^2,2u^2-u,u^2-u)$-difference sets when the square-free part of $u$ divides $6$, and by~\cite{McFarland73} that there is an abelian reversible $(4000,775,150)$-difference set. Applying Theorem~\ref{TAthm} to these two cases gives the existence of the corresponding triple arrays, where the parameters are calculated using Theorem~\ref{TASBIBDthm} and Corollary~\ref{TAparakor}.
\end{proof}

\end{section}

%===================================================
%============== direct construction ================
\begin{section}{A direct construction}\label{directsect}
To help analyse non-abelian difference sets, we give a more direct construction
of row-column designs from difference sets. This is nothing new: we have simply
re-written the previous construction leaving out the intermediate step via
Youden squares.

\begin{df} Let $D$ be a non-empty proper subset of $G$. Construct
an array $A(G,D)$ with rows indexed by $D$ and columns by $G\setminus D$,
with the $(x,y)$ entry equal to $x^{-1}y$. Note that the entries of the
array are non-identity elements of $G$, and that no element of $G$ is repeated
in a row or column.
\end{df}

\begin{prop}
Suppose that $D$ is a left difference set in a group $G$ of order $v$. Then
$A(G,D)$ is a double array of size $k\times(v-k)$ on $v-1$ symbols.
It is a triple array if and only if the size of
$x^{-1}D\cap D^{(-1)}y$ is constant for $x\in D$, $y\in G\setminus D$.\qed
\end{prop}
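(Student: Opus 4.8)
The plan is to verify the four RL-form conditions for $A(G,D)$ directly, mirroring the Youden-square proof but streamlined. First I would note that $A(G,D)$ is obtained from the Youden square $Y$ of Construction~\ref{Yconstr} (with $G$ written multiplicatively) by the deletion of Construction~\ref{Rconstr}: the column of $Y$ indexed by $j\in G$ contains $Dj$, and deleting the column $j=1_G$ together with the symbols of $D$ leaves, in the row indexed by $i\in D$, the value $ij$ in column $j$; after relabelling columns $j\mapsto y$ so that the entry becomes $x^{-1}y$ one recovers exactly the array of the definition. Hence Theorem~\ref{DAthm} already gives that $A(G,D)$ is a double array of size $k\times(v-k)$ on the $v-1$ non-identity symbols, establishing RTA1--RTA3 without further work. (Alternatively, I would re-prove RTA1--RTA3 in two lines each: each of the $v-1$ non-identity symbols $g$ appears in row $x$ iff $x^{-1}y=g$, i.e. $y=xg$, which lies in $G\setminus D$ for $k-\lambda$ values of $x\in D$ by the difference-set property applied to $D$ and $Dg$, giving equireplication; RTA2 and RTA3 are the BIBD conditions for $\dev(D)$ restricted appropriately.)

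The substance is the equivalence with RTA4. Fix a row $x\in D$. The columns ``active'' for $x$ are those $y\in G\setminus D$, and RTA4 asks that the multiset of symbols $\{x^{-1}y : y\in G\setminus D\}$ contain each non-identity symbol the same number of times; by Theorem~\ref{TASBIBDthm} and Lemma~\ref{lambda_rclma} that common number must be $\lambda_{rc}=k-\lambda$. Now the key step is a counting identity: for a symbol $g$ and row $x$, the number of active columns $y$ with $x^{-1}y=g$ equals the number of $y\in G\setminus D$ with $y=xg$, which is $1$ if $xg\notin D$ and $0$ otherwise. So RTA4 for row $x$ is the statement that $|\{g\in G\setminus 1_G : xg\notin D\}|$ is... no --- more precisely, RTA4 compares, for two symbols $g_1,g_2$, whether $xg_1\in D$ and $xg_2\in D$ agree, which is automatic; the real content is that RTA4 must hold \emph{simultaneously for all rows}, and the count of active columns hitting symbol $g$, summed over rows, is what is balanced. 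The cleanest route is the translation into the stated intersection condition: I would show that the number of rows $x\in D$ in which symbol $g$ occurs in an active column equals $|D \cap D g^{-1}| - |\{x\in D : xg\in D\}|$ type expression, and re-express the failure/success of RTA4 via the set $x^{-1}D\cap D^{(-1)}y$. Concretely, symbol $s:=x^{-1}y$ (with $y\notin D$) ranges over exactly $x^{-1}(G\setminus D)$; RTA4 for row $x$ demands this multiset, intersected with the requirement coming from the column-symbol structure, be $\lambda_{rc}$-uniform — and unwinding the RL-form bookkeeping shows the count of occurrences of a fixed symbol in the active part attached to column $y$ equals $|x^{-1}D\cap D^{(-1)}y|$ up to the constant shifts handled exactly as in the proof of Theorem~\ref{TAthm} (the $nG - ng d^{-1}$ computation). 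Thus RTA4 $\iff$ $|x^{-1}D\cap D^{(-1)}y|$ is independent of the pair $(x,y)$.

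In more detail, I would run the group-ring computation of Theorem~\ref{TAthm} in reverse: in $\mathbb{Z}G$, the multiset of symbols appearing in the active cells of column $y\in G\setminus D$ is $D^{(-1)}y$ restricted to those rows $x$ with $x^{-1}y\notin$ (the symbols in that column of $Y$) — but since we have passed to $A(G,D)$, the correct object is $\{x^{-1}y : x\in D\} = D^{(-1)}y$ as a multiset, and the active rows for column $y$ are precisely those $x\in D$; there is no further restriction on rows, only on columns. So for a fixed row $x$, the active symbols form $x^{-1}(G\setminus D)$, and RTA4 for row $x$ is that this multiset is $\lambda_{rc}$-balanced over non-identity elements — which, since $x^{-1}(G\setminus D) = x^{-1}G - x^{-1}D = G - x^{-1}D$ and $G$ is balanced, is equivalent to $x^{-1}D$ being a constant multiple of something, and fails in general. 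The resolution of this apparent contradiction (RTA4 should not be automatic) is that RTA4 also constrains the \emph{column} symbols: the columns active in row $x$ must contain each symbol $\lambda_{rc}$ times, where ``contain'' counts only occupied cells, and an occupied cell in such a column in some \emph{other} row $x'$ contributes. So the correct multiset is $\sum_{y\in G\setminus D, \text{ active for }x} D^{(-1)}y = D^{(-1)}(G\setminus L_x)$ where $L_x = \{y : x^{-1}y\in D\} = xD$... and I would carry through $D^{(-1)}(G-xD) = kG - D^{(-1)}xD$, then use reversibility-type expansions. The honest statement is: the occurrences of a symbol across the active columns of row $x$, as $x$ and $y$ both vary, is governed exactly by $|x^{-1}D\cap D^{(-1)}y|$, and RTA4 $\iff$ this is constant.

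The main obstacle I anticipate is precisely pinning down \emph{which} multiset RTA4 refers to in the $A(G,D)$ picture — getting the bookkeeping right so that the relevant quantity is exactly $|x^{-1}D\cap D^{(-1)}y|$ rather than some translate or complement of it. Once the array $A(G,D)$ is correctly identified as $Y^{[C]}$ with $C$ the column indexed by $1_G$, the translation is forced and the group-ring manipulation is identical in spirit to the proof of Theorem~\ref{TAthm}; I would lean on that proof rather than redo it, citing it for the shape $nG - ngd^{-1}$ and only isolating the clause $|x^{-1}D\cap D^{(-1)}y| = \text{const}$ as the exact obstruction to RTA4 when $D$ is not reversible. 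I expect the final write-up to be short: identify $A(G,D)=Y^{[C]}$, quote Theorem~\ref{DAthm} for RTA1--RTA3, and compute the active-cell multiset for row $x$ and column $y$ to read off that RTA4 holds iff $|x^{-1}D\cap D^{(-1)}y|$ is independent of $x\in D$ and $y\in G\setminus D$.
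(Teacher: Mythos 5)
Your double-array half is fine, and your ``alternative'' two-line verification is essentially what the paper itself does: each non-identity $z$ has $k$ representations $z=x^{-1}y$ with $x\in D$, of which $\lambda$ have $y\in D$, so each symbol occurs $k-\lambda$ times; row $x$ consists of $x^{-1}(G\setminus D)$, column $y$ consists of $D^{(-1)}y=(y^{-1}D)^{(-1)}$, and since $G\setminus D$ is again a difference set and left translates of a difference set (and their inverses) meet pairwise in a constant number of points, TA2 and TA3 follow. Your identification of $A(G,D)$ with the array arising from $Y^{[C]}$, $C$ the column indexed by $1_G$, is also essentially right, but it requires interchanging the roles of columns and symbols of $Y^{[C]}$ (that is what the RL form means), not merely ``relabelling columns''; this conflation of $A(G,D)$ with its RL form is exactly what derails the rest of your argument.

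The genuine gap is the ``if and only if'', which is the entire content of the proposition, and it is never actually derived. You attempt to verify RTA4, but you do not pin down the multiset it refers to: your first candidate (that the set $x^{-1}(G\setminus D)$ of entries of row $x$ be $\lambda_{rc}$-balanced) is not RTA4, as you yourself notice; your second candidate, $D^{(-1)}(G\setminus xD)$, is not the right aggregate either (in a non-abelian group it encodes a different count than the one TA4 requires), and you propose to finish it with ``reversibility-type expansions'' and by citing the $nG-ngd^{-1}$ computation from Theorem~\ref{TAthm} --- but that computation uses $D^{(-1)}=D$, which is precisely not assumed in this proposition. In the end you assert the equivalence while conceding that identifying the relevant count with $|x^{-1}D\cap D^{(-1)}y|$ is the unresolved obstacle; that identification is the proof. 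The missing observation is elementary and needs no RL-form bookkeeping at all: in $A(G,D)$ itself the symbol set of row $x$ is $G\setminus x^{-1}D$ and the symbol set of column $y$ is the $k$-set $D^{(-1)}y$, so the number of symbols common to row $x$ and column $y$ equals $k-|x^{-1}D\cap D^{(-1)}y|$, and TA4 is verbatim the constancy of this quantity over $x\in D$, $y\in G\setminus D$. (If you insist on the RL-form route, the correct aggregate over the columns active in row $x$ is $\sum_{j\in x^{-1}(G\setminus D)}Dj=kG-Dx^{-1}D$ in $\mathbb{Z}G$, whose coefficient at $y\in G\setminus D$ is again $k-|x^{-1}D\cap D^{(-1)}y|$; your proposal never reaches this.)
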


\begin{proof}
Suppose that $D$ is a $(v,k,\lambda)$ left difference set, so that any two
left translates of $D$ meet in $\lambda$ elements.
The cancellation law for groups shows that no element is repeated in a row or
column; and clearly the identity does not occur anywhere in the array.
Given $z\ne0$, there are $k$ representations as $z=x^{-1}y$ with
$x\in D$ (one for each element of $D$; and $\lambda$ of these have $y\in D$.
So the number of occurrences of $z$ in the table is $k-\lambda$. Further,
we see that
\begin{itemize}\itemsep0pt
\item row $x$ of $A$ contains the elements of $x^{-1}(G\setminus D)$;
\item column $y$ of $A$ contains the elements of $D^{(-1)}y$.
\end{itemize}

If $D$ is a left difference set, then so is $G\setminus D$. Moreover,
$D^{(-1)}y=(y^{-1}D)^{-1}$, so any two translates of $D^{(-1)}$ meet in a
constant number of points. It follows immediately that $A$ is a double array.
The condition for $A$ to be a triple array is clear.
\end{proof}

Suppose that $D^{(-1)}$ is equal to a left translate of $D$, and
that every right translate of $D$ is also a left translate. Then any row and
any column contain a constant number of common elements, so we have a triple
array. Thus, if $D$ is a difference set with multiplier $-1$ in an abelian
group, then $A$ is a triple array.
\end{section}
%===================================================
%============== non-abelian case ===================
\begin{section}{Non-abelian difference sets}\label{nonabsect}
\begin{quest}
Suppose that $D$ is a left difference set in a non-abelian
group $G$. Is it ever possible for $A(G,D)$ to be a triple array?
\end{quest}

We next discuss some computations related to this question. (We have not found
any examples.)

\begin{prop}
Let $D$ be a left difference set in a group $G$. Then
\begin{enumerate}
\item [(a)] $D$ generates $G$, so $D$ cannot be contained in a proper subgroup of $G$.
\item [(b)]The set $S=\{x\in G: xD=Dy\hbox{ for some }y\in G\}$ is a subgroup of $G$.
\item [(c)] If $D\subseteq S$, then $S=G$.
\end{enumerate}
\end{prop}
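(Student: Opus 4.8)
The plan is to treat the three parts largely independently, since each uses a different idea, though (c) will draw on (b). For part (a), I would argue by contraposition: suppose $D$ is contained in a proper subgroup $H<G$. Then every left translate of $D$ lying inside a coset of $H$, and in particular $D$ and $1_G\cdot D = D$ meet in $k$ elements while, choosing a translate $gD$ with $gH\ne H$, the two sets $D$ and $gD$ are disjoint, giving intersection $0\ne\lambda$ (recall $\lambda\ge 1$ for a nontrivial difference set). More cleanly: the left translates of $D$ are the blocks of the symmetric design $\mathrm{dev}$-analogue for left difference sets, and two blocks meet in $\lambda>0$ points, so no two translates can lie in distinct cosets of $H$; but $\{gD : g\in G\}$ would then all lie in a single coset of $H$, forcing $|G|\le |H|$, a contradiction. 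Hence $\langle D\rangle = G$.

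For part (b), I would verify directly that $S=\{x\in G : xD = Dy \text{ for some } y\in G\}$ is closed under the group operations. First $1_G\in S$ trivially (take $y=1_G$). If $x_1,x_2\in S$ with $x_1D=Dy_1$ and $x_2D=Dy_2$, then $x_1x_2D = x_1(x_2D) = x_1 D y_2 = (x_1D)y_2 = Dy_1y_2$, so $x_1x_2\in S$. For inverses, from $xD=Dy$ we get $D = x^{-1}Dy$, hence $x^{-1}D = Dy\cdot y^{-1} \cdot \dots$ — more carefully, $x^{-1}D = x^{-1}(Dy)y^{-1}\cdot y = \dots$; the clean way is: $xD=Dy \Rightarrow D=x^{-1}Dy \Rightarrow Dy^{-1} = x^{-1}D$, so $x^{-1}D = D y^{-1}$, giving $x^{-1}\in S$. (One should note $y$ is uniquely determined by $x$ since $|D|<|G|$ and left/right translates are determined by any single displacement, but this is not strictly needed for the subgroup check.)

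For part (c), suppose $D\subseteq S$. Since $S$ is a subgroup by (b) and $D$ generates $G$ by (a), we get $G = \langle D\rangle \subseteq S$, hence $S=G$. This is the step where the pieces fit together, and it is essentially immediate once (a) and (b) are in hand.

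The main obstacle is really only in part (a): one must make precise the claim that the left translates of a left difference set form a symmetric $2$-design whose blocks pairwise meet in $\lambda$ points, so that $\lambda\ge 1$ (nontriviality) forces any two translates to share a coset of $H$. This is the left-handed mirror of Theorem~\ref{devDSBIBDthm}, and I would either cite it in that form or give the one-line counting argument above. Parts (b) and (c) are routine group-theoretic bookkeeping with no real difficulty beyond keeping the sides of translates straight.
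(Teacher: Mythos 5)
Your proposal is correct, but it reaches (a) by a different and heavier route than the paper, so a comparison is in order. For (a) the paper simply observes that, since $\lambda\geq 1$, every non-identity element of $G$ is of the form $x^{-1}y$ with $x,y\in D$ (and the identity trivially is), so $\langle D\rangle=G$ immediately; your contrapositive argument via cosets of a proper subgroup $H$ instead needs the auxiliary fact that two distinct translates of $D$ meet in exactly $\lambda$ points, i.e.\ the left-handed analogue of Theorem~\ref{devDSBIBDthm}. That fact is true, but note the left/right subtlety you should not gloss over: intersections of \emph{left} translates $g_1D\cap g_2D$ are counted by \emph{right} differences $d_1d_2^{-1}$, so you must either invoke the left-iff-right equivalence (Bruck's result, recalled in Sect.~\ref{nonabsect}) or run your coset argument with right translates $Dg$; with that caveat your argument is sound, just less economical than the paper's one-liner. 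For (b) your direct verification ($1_G\in S$; $x_1x_2D=x_1Dy_2=Dy_1y_2$; and $xD=Dy\Rightarrow x^{-1}D=Dy^{-1}$) is exactly the unpacked version of the paper's slicker formulation, which views $S$ as the projection onto the first factor of the stabiliser of $D$ under the diagonal action $(x,y)\colon D\mapsto x^{-1}Dy$ of $G\times G$; both are fine, and yours has the small virtue of being self-contained. For (c) the paper gives no argument at all, and the deduction you supply ($D\subseteq S$, $S$ a subgroup, $\langle D\rangle=G$, hence $S=G$) is clearly the intended one.
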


\begin{proof}
(a) holds because every element of $G$ has the form $x^{-1}y$ for some
$x,y\in D$.

To see that (b) holds, note that $G\times G$ acts on the power set of $G$ by
the \emph{diagonal action} $(x,y):D\mapsto x^{-1}Dy$; the stabiliser of a set
$D$ is thus a subgroup of $G\times G$, and the set $S$ of the Proposition is
the projection of this subgroup onto the first factor.
\end{proof}

Before we discuss the non-abelian difference sets in Kibler's list~\cite{Kibler}, we recall some preliminary observations.
\begin{enumerate}\itemsep0pt
\item [(a)] $D$ is a left difference set if and only if it is a right difference
set (that is, the non-identity elements of $G$ occur equally often in the form
$yx^{-1}$, for $x,y\in D$~\cite{Bruck}.
\item [(b)] We say that $-1$ is a \emph{weak multiplier} of $D$ if the set
$D^{(-1)}=\{d^{-1}:d\in D\}$ (which is also a difference set) is a translate
$Dx$ of $D$, for some $x\in G$.
\item [(c)] We say that $D$ is \emph{reversible} if $D^{(-1)}=D$. Note that, if 
$D^{(-1)}=Dx^2$ where $x$ is central in $G$, then
\[(Dx)^{(-1)}=(xD)^{(-1)}=D^{(-1)}x^{-1}=Dx,\]
so $Dx$ is a reversible difference set.
\end{enumerate}

\paragraph{\bf{The case} $(v,k,\lambda)=(16,6,2)$} For this case Kibler found, up to equivalence, that eight of the twelve groups having difference sets are non-abelian, containing $19$ difference sets. 

Note that Kibler's group (J) is incorrect. The presentation of the group should
be $\langle a,b:a^8=b^2=1,ba=a^5b\rangle$, and the two difference sets are
$\{1,a,a^2,a^5,a^4b,a^2b\}$ and $\{1,a,a^3,a^4,a^3b,a^5b\}$. This is noted
by an asterisk in the table below. The other notation is as in Kibler's paper.

Using \textsf{GAP} \cite{gap}, we examined all cases. Data on the
number of reversible translates of $D$, the number of translates with a
weak multiplier $-1$ (this is weaker than having a reversible translate),
and on the number $s$ of $x$ for which $xD=Dy$ for some $y$, is given in
Table~\ref{t:16}. Each of these numbers is independent of the particular
choice of difference set among the collection of all translates.

\begin{table}[htbp]
\begin{center}
\begin{tabular}{|c|c|c|c|c|}
\hline
Group & Difference set & reversible & weak mult $-1$ & left $=$ right \\
\hline
E & 9 & $8$ & $8$ & $8$ \\
  & 10 & $8$ & $8$ & $8$ \\
\hline
F & 11 & -- & -- & $4$ \\
  & 12 & -- & $8$ & $8$ \\
\hline
G & 13 & -- & -- & $8$ \\
  & 14 & -- & -- & $4$ \\
\hline
H & 15 & $4$ & $8$ & $8$ \\
  & 16 & $4$ & $8$ & $8$ \\
  & 17 & $4$ & $8$ & $8$ \\
  & 18 & $4$ & $8$ & $8$ \\
\hline
I & 19 & -- & $8$ & $8$ \\
  & 20 & $4$ & $8$ & $8$ \\
  & 21 & -- & $8$ & $8$ \\
\hline
J* & 22 & 4 & $8$ & $8$ \\
   & 23 & 4 & $8$ & $8$ \\
\hline
K & 24 & $4$ & $4$ & $4$ \\
  & 25 & $4$ & $4$ & $4$ \\
\hline
L & 26 & -- & $4$ & $4$ \\
  & 27 & -- & $4$ & $4$ \\
\hline
\end{tabular}
\end{center}
\caption{\label{t:16}$(16,6,2)$ non-abelian difference sets}
\end{table}

Note that $s=v$ would be equivalent to the assertion that
every left translate is a right translate (giving a triple array); but this
never occurs.

We mention here a curious combinatorial structure which occurs in the examples
with $s=8$. In each of these cases, let $\mathcal{B}_1$ be the collection of
left translates of $D$ which are also right translates; $\mathcal{B}_2$ the
collection of left translates which are not right translates; $\mathcal{B}_4$
the collection of right translates which are not left translates; and
$\mathcal{B}_3$ the collection of sets which can be obtained from $D$ by a left
and a right translation, which have not already been included. Then each of
$\mathcal{B}_1$, \dots, $\mathcal{B}_4$ consists of eight $6$-element subsets
of $G$, having the properties that each of $\mathcal{B}_1\cup\mathcal{B}_2$,
$\mathcal{B}_2\cup\mathcal{B}_3$, $\mathcal{B}_3\cup\mathcal{B}_4$ and
$\mathcal{B}_4\cup\mathcal{B}_1$ is the set of blocks of a
$(16,6,2)$ SBIBD: a phenomenon which might repay further investigation!

Table~\ref{t:36} has the same information for the non-abelian $(36,15,6)$
difference sets. Again, we fail to find a triple array; but for difference
set number 15, half of the left translates are right translates, so we obtain
another example of the $4$-cycle of sets of blocks as described above.

\begin{table}[htbp]
\begin{center}
\begin{tabular}{|c|c|c|c|c|}
\hline
Group & Difference set & reversible & weak mult $-1$ & left $=$ right \\
\hline
IB & 5 & $2$ & $6$ & $6$ \\
   & 6 & -- & -- & $6$ \\
   & 7 & -- & -- & $6$ \\
   & 8 & $2$ & $6$ & $6$ \\
   & 9 & -- & -- & $6$ \\
   & 10	& -- & -- & $6$ \\
\hline
IC & 11 & -- & -- & $4$ \\
\hline
ID & 12	& $3$ & $9$ & $9$ \\
   & 13	& $3$ &	$9$ & $9$ \\
   & 14	& -- & -- & $9$ \\
   & 15	& $6$ &	$18$ & $18$ \\
   & 16 & $2$ &	$2$ & $2$ \\
\hline
IIB & 20 & $4$ & $4$ & $4$ \\
\hline
IIC & 21 & -- & -- & $6$ \\
    & 22 & $2$ & $6$ & $6$ \\
    & 23 & $2$ & $6$ & $6$ \\
    & 24 & -- & -- & $6$ \\
    & 25 & -- & -- & $6$ \\
    & 26 & $2$ & $6$ & $6$ \\
\hline
IID & 27 & $3$ & $3$ & $3$ \\
    & 28 & $4$ & $6$ & $6$ \\
    & 29 & $3$ & $3$ & $3$ \\
    & 30 & $4$ & $6$ & $6$ \\
    & 31 & $4$ & $6$ & $6$ \\
    & 32 & $3$ & $3$ & $3$ \\
\hline
IIE & 33 & -- & -- & $3$ \\
    & 34 & $1$ & $3$ & $3$ \\
\hline
\end{tabular}
\end{center}
\caption{\label{t:36}$(36,15,6)$ non-abelian difference sets}
\end{table}

\end{section}

%XXXXXXXXXXXXXXXXXXXXXXXXXXXXXXXXXXXXXXXXXXXXXXXXXXXXXXXXXXX
%XXXXXXXXXXXXXXXXXXXXXXXXXXXXXXXXXXXXXXXXXXXXXXXXXXXXXXXXXXX
%\def\listing #1#2#3{{\sc #1}:\ {\it #2},\ #3.}


\begin{thebibliography}{9}
%\addcontentsline{toc}{chapter}{Bibliography}

\bibitem{Agrawal1}
Hiralal Agrawal,
Some methods of construction of designs for two-way elimination of heterogeneity, 
J. Amer. Statist. Assoc. 61 (1966), 1153--1171.

\bibitem{Bagchi1998}
S. Bagchi,
On two-way designs,
Graphs Combin. 14 no. 4 (1998), 313--319.

\bibitem{Baumert}
M. K. Baumert,
Cyclic difference sets,
Springer--Verlag, New York, Lecture Notes in Mathematics, 1971.

\bibitem{BJL}
T.\ Beth, D.\ Jungnickel and H.\ Lenz,
Design Theory (Second Edition)
Cambridge University Press, Cambridge, England, 1999.

\bibitem{Bruck}
R. H. Bruck,
Difference sets in a finite group, 
Trans. Amer. Math. Soc. 78 (1955), 464--481. 

\bibitem{gap}
The GAP Group, GAP -- Groups, Algorithms, and Programming, Version 4.7.2; 2013:
\url{http://www.gap-system.org}.

\bibitem{Johnsen}
E. C. Johnsen,
The inverse multiplier for abelian group difference sets,
Canad. J. Math. 16 (1964), 787--796.

\bibitem{JungSchmidt1997}
D.\ Jungnickel and B.\ Schmidt,
Difference sets: An update, In: Geometry, combinatorial designs and related structures, J. W. P. Hirschfeld, S. S. Magliveras and M. J. de Resmini (Editors),
Cambridge University Press, Cambridge, 1999, pp 89-112.

\bibitem{Kibler}
R. E. Kibler,
A Summary of Noncyclic Difference Sets, $k<20$,
J. of Combinatorial Theory (A) 25 (1978), 62--67.

\bibitem{Ma1991}
S. L. Ma,
McFarland's conjecture on abelian difference sets with multiplier $-1$,
Designs, Codes and Cryptography 1 (1991), 321--332.

\bibitem{McFMa1990}
R. L. McFarland and S. L. Ma,
Abelian difference sets with multiplier $-1$,
Arch. Math. 54 (1990), 610--623.

\bibitem{McFarland73}
R. L. McFarland,
A family of difference sets in non-cyclic groups,
J. Comb. Th. (A) 15 (1973), 1--10.

\bibitem{McFarland1990b}
R. L. McFarland,
Subdifference sets of Hadamard difference sets,
J. Comb. Th. (A) 54 (1990), 112--122.

\bibitem{MPWY}
J. P. McSorley, N. C. K. Phillips, W. D. Wallis and J. L. Yucas,
Double Arrays, Triple Arrays and Balanced Grids,
Designs, Codes and Cryptography 35 (2005), 21--45.

\bibitem {McS}
J. P. McSorley,
Double Arrays, Triple Arrays, and Balanced Grids with $v=r+c-1$,
Designs, Codes and Cryptography 37 (2005), 313--318.

\bibitem{Menon62}
P. K. Menon,
On difference sets whose parameters satisfy a certain relation,
Proc. Amer. Math. Soc. 13 (1962), 739--745.

\bibitem{MoorePoll}
E.\ H.\ Moore and H.\ S.\ Pollatsek,
Difference Sets: Connecting Algebra, Combinatorics, and Geometry,
American Mathematical Society, 2013.

\bibitem {TAYS}
T.\ Nilson and L.-D.\ Öhman,
Triple arrays and Youden squares,
Designs, Codes and Cryptography 75 no. 3 (2015), 429--451.

\bibitem {PTA}
D. A. Preece, W. D. Wallis and J. L. Yucas,
Paley triple arrays,
Australas. J. Combin. 33 (2005), 237--246.

\bibitem {RagNag}
D. Raghavarao and G. Nageswararao,
A note on a method of construction of designs for two-way elimination of heterogeneity,
Communications in statistics 3 no. 2 (1974), 197--199.

\bibitem{Seberry}
J. R. Seberry,
A note on orthogonal Graeco-Latin designs,
Ars Combin. 8 (1979), 85--94.

\bibitem{SmithHartley}
C. A. B. Smith and H. O. Hartley,
The Construction of Youden Squares,
Journal of the Statistical Society, Series B, Methodological 10 no. 2 (1948), 262--263.

\bibitem {Street}
D. J. Street,
Graeco-Latin and nested row and column designs,
Combinatorial Math., Lecture Notes in Math. 884, Springer 8 (1981), 304--313.

\bibitem {WYAgrawal}
W. D. Wallis and J. L. Yucas,
Note on Agrawal's ``Designs for Two-way Elimination of Heterogeneity'',
J. Combin. Math. Combin. Comput. 46 (2003), 155--160. 


\end{thebibliography}
\end{document}